
\documentclass[12pt]{amsart}
\usepackage{amssymb}
\usepackage[mathscr]{eucal}
\usepackage{epsf}
\usepackage{epsfig}
\usepackage{color}
\DeclareGraphicsExtensions{.pstex,.eps,.epsi,.ps}


\vfuzz2pt 

 \newtheorem{thm}{Theorem}[section]
 \newtheorem{cor}[thm]{Corollary}
 \newtheorem{lem}[thm]{Lemma}
 \newtheorem{prop}[thm]{Proposition}
 \theoremstyle{definition}
 
 \theoremstyle{remark}
 \newtheorem{rem}[thm]{Remark}
 \numberwithin{equation}{section}

 \newcommand{\Real}{\mathbb{R}}

 \newcommand{\hor}{\text{hor}}
 \newcommand{\ver}{\text{ver}}
 \newcommand{\tr}{\textbf{tr}}

 \newcommand{\ric}{\textbf{Rc}}

 \newcommand{\Rm}{\textbf{Rm}}

  \newcommand{\Rb}{\textbf{w}}
  \newcommand{\Ta}{\frac{\bar D}{dt}}
  
    \newcommand{\Lev}[1]{\frac{D}{d#1}}

\begin{document}

\title[Differential Harnack inequalities on Sasakian manifolds]{Differential Harnack inequalities for a family of sub-elliptic diffusion equations on Sasakian manifolds}

\author{Paul W.Y. Lee}
\email{wylee@math.cuhk.edu.hk}
\address{Room 216, Lady Shaw Building, The Chinese University of Hong Kong, Shatin, Hong Kong}

\thanks{The author's research was supported by RGC grant 404512.}

\date{\today}

\maketitle

\begin{abstract}
We prove a version of differential Harnack inequality for a family of sub-elliptic diffusions on Sasakian manifolds under certain curvature conditions.
\end{abstract}


\section{Introduction}

Harnack inequality is one of the most fundamental results in the theory of elliptic and parabolic equations. For linear parabolic equations in divergence form, this was first done in \cite{Mos}. Since then, numerous developments around this inequality were found. In \cite{LiYa}, the, so called, Li-Yau estimate was proved. This is a sharp gradient estimate for linear parabolic equations on Riemannian manifolds with a lower bound on the Ricci curvature. This estimate is also called a differential Harnack inequality since one can recover the Harnack inequality by integrating this estimate along geodesics.

There are many generalizations of the Li-Yau estimate for geometric evolution equations. This includes the evolution equations for hypersurfaces \cite{Ha3,Ch1,An}, the Yamabe flow \cite{Ch2}, the Ricci flow \cite{Ha2} and its K\"ahler analogue \cite{Ca}. For a more detail account of these generalizations
as well as further developments, see \cite{Ni}.

There are also generalizations \cite{BaLe} of the Li-Yau estimate to linear parabolic equations of the form
\begin{equation}\label{parabolic}
\dot \rho_t =L\rho_t
\end{equation}
under certain conditions called curvature-dimension conditions. Here $L$ is a second order linear elliptic operator without constant term. The curvature-dimension conditions were recently generalized by \cite{BaGa} to obtain Li-Yau type estimates for equations of the form (\ref{parabolic}), where $L$ is a linear sub-elliptic operator without constant term. The following is one of the main results in \cite{BaGa} when the underlying manifold is Sasakian and the equation is the sub-elliptic heat equation (for the definition of Sasakian manifolds and various related notions appeared in Theorem \ref{BaGaSasakian}, see Section 2).

\begin{thm}\label{BaGaSasakian}\cite{BaGa}
Assume that the manifold is Sasakian and satisfies $\overline\ric\geq 0$. Then any positive solution of the sub-elliptic heat equation
\[
\dot \rho_t=\Delta_\hor\rho_t
\]
satisfies
\[
\begin{split}
\left(1+\frac{3}{n}\right)\dot f_t+\frac{1}{2}|\nabla_\hor f_t|^2+&\frac{tn}{3}|\nabla_\ver f_t(x)|^2\leq\frac{2n\left(1+\frac{3}{n}\right)^2}{t}
\end{split}
\]
for all $t\geq 0$ and all $x$ in $M$, where $f_t=-2\log\rho_t$.
\end{thm}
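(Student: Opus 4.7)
The plan is to follow the entropic / Bakry--\'Emery method adapted to the sub-Riemannian setting by Baudoin--Garofalo. First, set $f_t=-2\log\rho_t$; a direct computation from $\dot\rho_t=\Delta_\hor\rho_t$ gives the Hamilton--Jacobi-type equation
\[
\dot f_t=\Delta_\hor f_t-\tfrac12|\nabla_\hor f_t|^2,
\]
so the natural non-linear quantities to track along the flow are the horizontal and vertical carr\'e du champs $\Gamma(f_t):=|\nabla_\hor f_t|^2$ and $\Gamma^Z(f_t):=|\nabla_\ver f_t|^2$. My target is to choose time-dependent coefficients $a(t),b(t),c(t),\gamma(t)$ so that the functional
\[
\Phi_t=a(t)\Gamma(f_t)+b(t)\Gamma^Z(f_t)+c(t)\dot f_t-\gamma(t)
\]
is a sub-solution of the drift heat equation $(\partial_t-\Delta_\hor+\langle\nabla_\hor f_t,\nabla_\hor\cdot\rangle)\Phi_t\leq 0$. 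Once this is achieved, the parabolic maximum principle, combined with the correct $t\to 0^+$ behaviour of $\Phi_t$, forces $\Phi_t\leq 0$, which after matching constants is precisely the claimed inequality with $a=\tfrac12$, $b=tn/3$, $c=1+3/n$ and $\gamma=2n(1+3/n)^2/t$.

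The computational heart is the evolution of $\Gamma$ and $\Gamma^Z$ along the flow. The horizontal Bochner identity under $\overline\ric\geq 0$ produces
\[
(\partial_t-\Delta_\hor)\Gamma(f_t)\leq -2|\nabla_\hor^2 f_t|^2-\langle\nabla_\hor\Gamma(f_t),\nabla_\hor f_t\rangle,
\]
and the dimensional Cauchy--Schwarz $|\nabla_\hor^2 f_t|^2\geq\tfrac1n(\Delta_\hor f_t)^2$ is the source of the $1/n$ factors in the theorem. For $\Gamma^Z(f_t)$, the Sasakian commutation relations between the Reeb vector field $Z$ and $\Delta_\hor$ generate a mixed Hessian term $|\nabla_\hor\nabla_\ver f_t|^2$ in its evolution; this is precisely the term that the Baudoin--Garofalo generalized curvature-dimension inequality, applied under $\overline\ric\geq 0$, is designed to absorb. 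The additional linear term $c(t)\dot f_t$ in $\Phi_t$ contributes $c(t)(\partial_t-\Delta_\hor)\dot f_t=-c(t)\langle\nabla_\hor|\nabla_\hor f_t|^2/2,\nabla_\hor\cdot\rangle$-type pieces after commuting $\partial_t$ past $\Delta_\hor$, which conveniently combine with the drift term in the chosen operator.

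Assembling the three contributions, $(\partial_t-\Delta_\hor+\langle\nabla_\hor f_t,\nabla_\hor\cdot\rangle)\Phi_t$ takes the form of a quadratic form in $\Delta_\hor f_t$, $|\nabla_\hor^2 f_t|$, $|\nabla_\hor\nabla_\ver f_t|$, $\Gamma(f_t)$, $\Gamma^Z(f_t)$, plus lower-order contributions from $\dot a,\dot b,\dot c,\dot\gamma$. The delicate step, and the one I expect to be the main obstacle, is the \emph{coefficient matching}: one must choose $a,b,c,\gamma$ so that this quadratic form is pointwise non-positive. The specific values $c=1+3/n$ and $b=tn/3$ arise from balancing the dimensional term $\tfrac1n(\Delta_\hor f_t)^2$ against the vertical coupling introduced by the Reeb commutator, whereas the $1/t$ behaviour of $\gamma$ is then dictated by the leading-order singularity of the heat kernel as $t\to 0^+$. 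Provided the ODE/algebraic system for $(a,b,c,\gamma)$ can be solved with non-positive quadratic form, the maximum principle (applied after a standard cut-off argument to handle the non-compactness of the Sasakian manifold) closes the proof.
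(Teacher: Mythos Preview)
Your overall strategy is correct and is precisely the Baudoin--Garofalo entropic method from \cite{BaGa}: one tracks a time-weighted combination of $\Gamma(f_t)$, $\Gamma^Z(f_t)$ and $\dot f_t$, shows it is a subsolution of the drift heat operator via the generalized curvature--dimension inequality, and concludes by the maximum principle. Two small imprecisions in your sketch are worth flagging. First, the displayed horizontal Bochner inequality cannot hold as written: in the sub-Riemannian setting the commutator $[\Delta_\hor,\nabla_\hor]$ produces an unavoidable mixed term involving $\nabla_\ver f_t$ (in the Sasakian case, a term of the type $\langle J\nabla_\hor f_t,\nabla_\hor(\Rb f_t)\rangle$), and it is only after adding $b(t)\Gamma^Z$ that this term can be absorbed---this is the whole point of the $\Gamma_2/\Gamma_2^Z$ coupling. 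Second, the horizontal distribution has rank $2n$, so the dimensional Cauchy--Schwarz reads $|\nabla_\hor^2 f_t|^2\geq\frac{1}{2n}(\Delta_\hor f_t)^2$; the factor $2n$ (not $n$) is what eventually appears in the right-hand side $2n(1+3/n)^2/t$.

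The paper, however, takes a genuinely different route. Rather than Bochner formulas, it works along the flow $\varphi_t$ of $\nabla_\hor f_t$: it introduces \emph{parallel adapted frames} along $t\mapsto\varphi_t(x)$, derives a matrix Riccati equation for the frame representation of $w\mapsto\nabla_w\nabla_\hor f_t$ (Lemmas~\ref{N} and~\ref{Ncomp}), and from its trace obtains a differential inequality for $\frac{d}{dt}\Delta_\hor f_t(\varphi_t)$ (Lemma~\ref{keylem}). The functional it tracks is $F_t=\Delta_\hor f_t+a_1(t)\dot f_t+\tfrac{a_2(t)}{2}|\nabla_\ver f_t|^2$, which is equivalent to your $\Phi_t$ once one substitutes $\Delta_\hor f_t=\dot f_t+\tfrac12|\nabla_\hor f_t|^2$; the coefficient ODEs in Theorem~\ref{keythm} play the role of your ``coefficient matching''. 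The curvature hypotheses enter in purely Riemannian form (conditions on $\ric^\hor$ and $\ric^\ver$, translated to $\overline\ric\geq 0$ via Proposition~\ref{Sasak}) rather than through an abstract CD inequality. What this buys is a framework that in principle extends to matrix Harnack estimates, as the author notes; what your Bakry--\'Emery approach buys is a cleaner packaging via $\Gamma_2$ calculus and a more direct link to functional inequalities. Both lead to the same endpoint here, but the mechanisms are distinct.
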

We remark that a local version of the above estimate, which did not take the curvature into account, appeared in an earlier work \cite{CaYa}.

On the other hand, the author proved in \cite{Le1} another version of the differential Harnack inequality which is different from that of \cite{BaLe}.
\begin{thm}\label{Lee1}\cite{Le1}
Let $\rho_t$ be a positive solution of the equation
\[
\dot\rho_t=\Delta\rho_t+\left<\nabla\rho_t,\nabla U_1\right>+U_2\rho_t
\]
on a compact Riemannian manifold of non-negative Ricci curvature. Assume that
\[
\Delta\left(-\Delta U_1-\frac{1}{2}|\nabla U_1|^2+2U_2\right)\geq k_3.
\]
Then
\[
2\Delta\log\rho_t+\Delta U_1 \geq -n a_{\frac{k_3}{n}}(t),
\]
where \[
a_K(t)= \begin{cases}
\sqrt{K}\cot(\sqrt{K}\,t) & \mbox{if $K>0$}\\
\frac{1}{t} & \mbox{if $K= 0$}\\
\sqrt{-K}\coth(\sqrt{-K}\,t) & \mbox{if $K<0$}.
\end{cases}
\]
\end{thm}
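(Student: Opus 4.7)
The plan is to reduce to a Hamilton--Jacobi type equation and then apply Bochner together with the parabolic minimum principle. First, let $\tilde\phi_t=2\log\rho_t+U_1$; then $\Delta\tilde\phi_t=2\Delta\log\rho_t+\Delta U_1$ is precisely the quantity we want to bound below. A direct computation starting from $\dot{\tilde\phi}_t=2\dot\rho_t/\rho_t$ and using the equation satisfied by $\rho_t$, together with $\Delta\rho_t/\rho_t=\Delta\log\rho_t+|\nabla\log\rho_t|^2$, should give
\[
\dot{\tilde\phi}_t = \Delta\tilde\phi_t + \tfrac{1}{2}|\nabla\tilde\phi_t|^2 + V,\qquad V:=-\Delta U_1-\tfrac{1}{2}|\nabla U_1|^2+2U_2.
\]
The hypothesis then reads $\Delta V\geq k_3$ and the target inequality becomes $\Delta\tilde\phi_t\geq -na_{k_3/n}(t)$. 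This change of variable absorbs the drift and the zeroth-order term into a single potential whose Laplacian we control.

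Next I would derive an evolution equation for $\Psi_t:=\Delta\tilde\phi_t$ by applying $\Delta$ to the equation for $\tilde\phi_t$ and then invoking the Bochner identity
\[
\tfrac{1}{2}\Delta|\nabla\tilde\phi_t|^2 = |\hess\tilde\phi_t|^2 + \langle\nabla\tilde\phi_t,\nabla\Delta\tilde\phi_t\rangle + \ric(\nabla\tilde\phi_t,\nabla\tilde\phi_t).
\]
Combining $\ric\geq 0$, the hypothesis $\Delta V\geq k_3$, and the trace Cauchy--Schwarz bound $|\hess\tilde\phi_t|^2\geq(\Delta\tilde\phi_t)^2/n=\Psi_t^2/n$ should produce the key differential inequality
\[
\dot\Psi_t \;\geq\; \Delta\Psi_t + \langle\nabla\tilde\phi_t,\nabla\Psi_t\rangle + \frac{\Psi_t^2}{n} + k_3.
\]

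Finally I would apply the parabolic minimum principle on the compact manifold. Setting $\Psi_{\min}(t):=\min_x\Psi_t(x)$, compactness guarantees the minimum is attained; at a spatial minimum $\nabla\Psi_t=0$ and $\Delta\Psi_t\geq 0$, so a one-sided (Dini) derivative argument gives
\[
\frac{d\Psi_{\min}}{dt} \;\geq\; \frac{\Psi_{\min}^2}{n}+k_3.
\]
A direct check of the three branches in the definition of $a_K$ verifies the identity $a_K'(t)=-a_K(t)^2-K$, so $y(t):=-na_{k_3/n}(t)$ is the exact solution of $y'=y^2/n+k_3$ with $y(t)\to-\infty$ as $t\to 0^+$. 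Since $\Psi_0$ is finite for smooth positive initial data and $y(0^+)=-\infty$, an ODE comparison argument (performed on $[\eps,T]$ then letting $\eps\to 0$) yields $\Psi_{\min}(t)\geq -na_{k_3/n}(t)$, which is the desired bound.

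The main obstacle I expect is the ODE comparison against a barrier that blows down to $-\infty$ at $t=0$; this is the standard but slightly delicate $\eps$-shift argument, and relies on the finiteness of $\Delta\tilde\phi_0$ from smooth positive initial data. A secondary technical point is justifying the parabolic min principle for the non-smooth envelope $\Psi_{\min}(t)$, handled either by one-sided derivatives of Lipschitz envelopes or by a standard viscosity argument.
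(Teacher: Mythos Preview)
Your argument is correct: the Hopf--Cole substitution $\tilde\phi_t=2\log\rho_t+U_1$ yields the Hamilton--Jacobi equation $\dot{\tilde\phi}_t=\Delta\tilde\phi_t+\tfrac12|\nabla\tilde\phi_t|^2+V$, Bochner plus $\ric\geq 0$ and $|\hess\tilde\phi_t|^2\geq\Psi_t^2/n$ give the scalar Riccati inequality $\dot\Psi_t\geq\Delta\Psi_t+\langle\nabla\tilde\phi_t,\nabla\Psi_t\rangle+\Psi_t^2/n+k_3$, and the ODE comparison with the explicit barrier $-na_{k_3/n}(t)$ finishes it. The barrier argument at $t=0^+$ is indeed the only delicate point, and the $\epsilon$-shift you describe (which the paper formalizes as ``stability'' of the comparison solution) is exactly what is needed.

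However, your route is \emph{not} the one taken in \cite{Le1}, and the paper says so explicitly: ``a moving frame argument was used for the proof of Theorem~\ref{Lee1} instead of the Bochner formula.'' In that approach one writes $f_t=-\tilde\phi_t$, flows along $\varphi_t$ with $\dot\varphi_t=\nabla f_t(\varphi_t)$, and tracks the matrix $N(t)$ representing $w\mapsto\nabla_w\nabla f_t$ in a parallel orthonormal frame along $\varphi_t$. The Riccati equation $\dot N=-N^2-R+M$ (Lemma~\ref{N} specialized to the Riemannian case $D=TM$) replaces Bochner; taking the trace reproduces exactly your scalar inequality, since $\tr(N^2)\geq(\tr N)^2/n$ is the same Cauchy--Schwarz step and the curvature term $\tr R=\ric(\nabla f_t,\nabla f_t)$ reappears. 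The maximum principle is then run along the characteristic $t\mapsto\varphi_t(x)$ rather than via the spatial minimum. The two arguments are equivalent at the scalar level, but the moving-frame version keeps the full matrix $N(t)$ in play and therefore upgrades with essentially no extra work to a matrix Harnack estimate \`a la Hamilton~\cite{Ha1}; your Bochner argument, being intrinsically traced, does not. This is precisely the trade-off the paper highlights, and the reason it develops the frame machinery of Sections~3--4 for the sub-elliptic generalization.
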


In this paper, we combine the ideas from \cite{BaGa} and \cite{Le1} to obtain a differential Harnack estimate for (\ref{parabolic}), where $L$ is a linear sub-elliptic operator (possibly with constant term).
In fact, we allow $L$ to have a mild non-linearity (see Theorem \ref{keythm} for the detail). In the case when the manifold is Sasakian and $L$ is linear, we have the following result.

\begin{thm}\label{keythmSasakian}
Assume that the manifold is a compact Sasakian and it satisfies $\overline\ric\geq 0$. Let $U_1$ and $U_2$ be two smooth functions on $M$ satisfying
\begin{enumerate}
\item $V=\Delta_\hor U_1+\frac{1}{2}|\nabla_\hor U_1|^2-2U_2$
\item $V\leq \kappa_1$,
\item $\Delta_\hor V+\frac{n^2}{3\kappa_2}\left(1+\frac{3}{n}\right)^2|\nabla_\ver V|^2\leq \kappa_2$,
\end{enumerate}
for some positive constants $\kappa_1$ and $\kappa_2$. Then any positive solution of the equation
\[
\dot \rho_t=\Delta_\hor\rho_t+\left<\nabla_\hor U_1,\nabla_\hor\rho_t\right>+U_2\rho_t
\]
satisfies
\[
\begin{split}
&\left(1+\frac{3}{n}\right)\dot f_t(x)+\frac{1}{2}|\nabla_\hor f_t|^2_x-V(x)\\
&+\left(1+\frac{n}{3}\right)\sqrt{\frac{n}{2\kappa_2}}\tanh(c_2t)|\nabla_\ver f_t(x)|^2\leq \frac{\kappa_2}{c_2}\coth(c_2t)+\frac{3\kappa_1}{n}
\end{split}
\]
for all $t\geq 0$ and all $x$ in $M$, where $c_2=\frac{1}{n+3}\sqrt{\frac{n\kappa_2}{2}}$ and $f_t=-2\log\rho_t-U_1$.
\end{thm}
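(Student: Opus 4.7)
\bigskip
\noindent\textbf{Proof plan.} The first step is a Hopf-Cole-type substitution. Since $\rho_t$ satisfies
$\dot\rho_t=\Delta_\hor\rho_t+\langle\nabla_\hor U_1,\nabla_\hor\rho_t\rangle+U_2\rho_t$ and $f_t=-2\log\rho_t-U_1$,
a direct computation using $\nabla_\hor\log\rho_t=-\tfrac{1}{2}(\nabla_\hor f_t+\nabla_\hor U_1)$ gives
\[
\dot f_t=\Delta_\hor f_t+V-\tfrac{1}{2}|\nabla_\hor f_t|^2,
\]
where $V=\Delta_\hor U_1+\tfrac{1}{2}|\nabla_\hor U_1|^2-2U_2$ is precisely the quantity in hypothesis (1). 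Thus the potential terms have been absorbed into the function $V$, mirroring the reduction in \cite{Le1}, and the problem becomes one of controlling a quasi-linear heat-type equation with a source $V$.

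\medskip
The second step is to set up the right auxiliary functional. Following the Baudoin--Garofalo scheme that underlies Theorem \ref{BaGaSasakian}, I introduce
\[
\Phi_t(x)=\alpha(t)\Bigl[\bigl(1+\tfrac{3}{n}\bigr)\dot f_t+\tfrac{1}{2}|\nabla_\hor f_t|^2-V\Bigr]+\beta(t)|\nabla_\ver f_t|^2-\gamma(t),
\]
with time-weights $\alpha,\beta,\gamma\geq 0$ to be determined so that the ODE system they satisfy produces exactly the $\tanh$ and $\coth$ profile appearing in the statement. Setting $P=\partial_t-\Delta_\hor-\langle\nabla_\hor U_1,\nabla_\hor\cdot\rangle$ plus the drift induced by $f_t$, one computes $P\Phi_t$ by differentiating each piece: the evolution of $\dot f_t$, of the two squared-gradients, and of $V$ itself. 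The crucial geometric input is the sub-Riemannian Bochner identity on Sasakian manifolds, combined with $\overline\ric\geq 0$, which supplies (as in \cite{BaGa}) an inequality of curvature-dimension type of the form
\[
\Gamma_{2,\hor}(f)+\nu\,\Gamma_{2,\ver}^Z(f)\geq \tfrac{1}{n}(\Delta_\hor f)^2-\tfrac{1}{\nu}|\nabla_\ver f|^2,
\]
holding for every $\nu>0$. This will give a pointwise lower bound on $-P\Phi_t$ of the form ``quadratic in the quantities inside $\Phi_t$, minus a constant.''

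\medskip
The third step is to exploit hypotheses (2) and (3). Condition (3) controls $\Delta_\hor V$ together with the vertical gradient of $V$, exactly the combination produced by the Bochner computation when differentiating the $V$ term (since $\Delta_\hor$ and $\nabla_\ver$ do not commute, a $|\nabla_\ver V|^2$ correction arises, whose coefficient $\tfrac{n^2}{3\kappa_2}(1+\tfrac{3}{n})^2$ is tuned to this mismatch). Condition (2) is then used to bound the residual $-V$ contribution by $\tfrac{3\kappa_1}{n}$, which is what produces the additive constant on the right-hand side. After rearrangement, I expect an inequality of the form
\[
P\Phi_t\leq -A(t)\Phi_t^2+B(t)\Phi_t+C(t),
\]
with coefficients determined by $n,\kappa_1,\kappa_2$, whose choice of $\alpha,\beta,\gamma$ makes the associated Riccati ODE have $\tfrac{\kappa_2}{c_2}\coth(c_2 t)$ as its distinguished solution with $c_2=\tfrac{1}{n+3}\sqrt{\tfrac{n\kappa_2}{2}}$.

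\medskip
The final step is the parabolic maximum principle. On the compact Sasakian manifold, $\Phi_t$ attains its spatial maximum; at that point the horizontal gradient of $\Phi_t$ vanishes and $\Delta_\hor\Phi_t\leq 0$, so $P\Phi_t\leq \partial_t\Phi_t$. Combining with the differential inequality above and integrating the resulting scalar ODE from $0$ yields $\max_x\Phi_t(x)\leq 0$, which rearranges to precisely the stated bound. The main obstacle, and the place where genuine calculation is required, is the second step: obtaining the differential inequality for $\Phi_t$ with exactly the right coefficients so that conditions (1)--(3) and $\overline\ric\geq 0$ combine cleanly, and so that the weights $\alpha,\beta,\gamma$ solving the resulting Riccati system produce the hyperbolic profile $\tanh(c_2 t)$ and $\coth(c_2 t)$ in the statement; this is where the interplay between the Baudoin--Garofalo Sasakian Bochner inequality and the auxiliary potential $V$ of \cite{Le1} must be balanced quantitatively.
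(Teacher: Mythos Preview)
Your plan is sound and would succeed, but it differs in execution from what the paper actually does. The paper does not argue directly in the Sasakian setting via the Baudoin--Garofalo curvature--dimension inequality; instead it first proves a general result (Theorem \ref{keythm}, and its linear specialization Corollary \ref{keycor3}) for an arbitrary distribution whose orthogonal complement is spanned by horizontal isometries, and then simply plugs in the Sasakian values $k=2n$, $K_2=\tfrac{n}{2}$, $K_3=\tfrac14$, $c=\tfrac{3}{n}$ coming from Proposition \ref{Sasak}. The general theorem is proved by a \emph{moving-frame} (Lagrangian) argument: one follows the flow $\varphi_t$ of $\nabla_\hor f_t$, sets $F_t=\Delta_\hor f_t+a_1(t)\dot f_t+\tfrac{a_2(t)}{2}|\nabla_\ver f_t|^2+a_3(t)f_t$, and computes $\tfrac{d}{dt}F_t(\varphi_t)$ via a matrix Riccati equation for $\nabla X_t$ in a parallel adapted frame (Lemmas \ref{N}--\ref{keylem}). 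The quadratic term $-\tfrac{1}{k}(\Delta_\hor f_t)^2$ is not kept as $-A\Phi_t^2$ as you anticipate, but is \emph{linearized} via a further auxiliary function $a_4(t)$ through $-x^2\leq \tfrac{a_4^2}{k}-\tfrac{2a_4}{k}x$; this yields a linear differential inequality for $F_t$ and a linear ODE for the comparison function $r(t)$, whose solution with the choices $a_2(t)=c_1\tanh(c_2t)$, $a_4(t)=\tfrac{4k(c+1)K_3}{c\,a_2(t)}$ produces exactly $r(t)=\tfrac{K_6}{c_2}\coth(c_2t)+cK_4$. Your Bochner/CD route and the paper's moving-frame route are essentially equivalent at the level of scalar inequalities (indeed your $\Phi_t$ with $\alpha\equiv1$ coincides with the paper's $F_t-r(t)$ once one substitutes $\Delta_\hor f_t=\dot f_t+\tfrac12|\nabla_\hor f_t|^2-V$); the paper's approach buys generality beyond the Sasakian case and is set up with an eye toward matrix Harnack estimates, while yours is more direct for the statement at hand. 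One small caution: your displayed CD inequality should have $\tfrac{1}{2n}(\Delta_\hor f)^2$ rather than $\tfrac1n$, since the horizontal distribution has rank $2n$, and the $-\tfrac{1}{\nu}$ penalty multiplies $|\nabla_\hor f|^2$ rather than $|\nabla_\ver f|^2$ in the Baudoin--Garofalo formulation; getting these constants right is exactly what makes the coefficient $\tfrac{n^2}{3\kappa_2}(1+\tfrac3n)^2$ in hypothesis (3) emerge.
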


As usual, we can integrate the estimate in Theorem \ref{keythmSasakian} and obtain a Harnack estimate. Let
\[
W(x)=\left(\frac{n}{n+3}\right)^2V(x)+\frac{3nK_4}{\left(n+3\right)^2}.
\]
In this case, we consider the following cost function.
\[
c_{s_0,s_1}(x_0,x_1)=\inf_I\int_{s_0}^{s_1}\frac{1}{2}|\dot\gamma(s)|^2+W(\gamma(s))ds
\]
where $I$ ranges over all smooth curves $\gamma(\cdot)$ such that $\dot\gamma(s)\in D_{\gamma(s)}$ for all $t$ in $[s_0,s_1]$.

\begin{cor}\label{keycorSasakian1}
Assume that the manifold is compact Sasakian and it satisfies $\overline\ric\geq 0$. Let $U_1$ and $U_2$ be two smooth functions on $M$ satisfying
\begin{enumerate}
\item $V=\Delta_\hor U_1+\frac{1}{2}|\nabla_\hor U_1|^2-2U_2$
\item $V\leq \kappa_1$,
\item $\Delta_\hor V+\frac{n^2}{3\kappa_2}\left(1+\frac{3}{n}\right)^2|\nabla_\ver V|^2\leq \kappa_2$,
\end{enumerate}
for some positive constants $\kappa_1$ and $\kappa_2$. Then any positive solution of the equation
\[
\dot \rho_t=\Delta_\hor\rho_t+\left<\nabla_\hor U_1,\nabla_\hor\rho_t\right>+U_2\rho_t
\]
satisfies
\[
\begin{split}
\frac{\rho_{s_1}(x_1)}{\rho_{s_0}(x_0)}&\geq \left(\frac{\sinh(c_2s_1)}{\sinh(c_2s_0)}\right)^{-(n+3)}\\
&\cdot \exp\Big(-\frac{1}{2}\left(U(x_1)-U(x_0)+\left(1+\frac{3}{n}\right)c_{s_0,s_1}(x_0,x_1)\right)\Big).
\end{split}
\]
for all $s_0, s_1\geq 0$ and all $x_0, x_1$ in $M$, where $c_2=\frac{1}{n+3}\sqrt{\frac{n\kappa_2}{2}}$.
\end{cor}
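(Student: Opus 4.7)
The plan is to integrate the pointwise estimate of Theorem \ref{keythmSasakian} along an arbitrary admissible (i.e. horizontal) curve $\gamma:[s_0,s_1]\to M$ joining $x_0$ to $x_1$ and then to take the infimum. Set $g(s)=f_s(\gamma(s))$, where $f_s=-2\log\rho_s-U_1$. Since $\dot\gamma(s)\in D_{\gamma(s)}$,
\[
g'(s)=\dot f_s(\gamma(s))+\langle\nabla_\hor f_s,\dot\gamma(s)\rangle.
\]
Dividing Theorem \ref{keythmSasakian} by $1+\tfrac{3}{n}$ and discarding the non-negative vertical gradient term gives
\[
\dot f_s\le-\frac{n}{2(n+3)}|\nabla_\hor f_s|^2+\frac{n}{n+3}V(\gamma(s))+\frac{n\kappa_2}{(n+3)c_2}\coth(c_2 s)+\frac{3\kappa_1}{n+3}.
\]

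Next I would apply the Young inequality with the specific parameter $\alpha=(n+3)/n$:
\[
\langle\nabla_\hor f_s,\dot\gamma\rangle\le\frac{n}{2(n+3)}|\nabla_\hor f_s|^2+\frac{n+3}{2n}|\dot\gamma|^2,
\]
chosen so that the horizontal gradient term cancels exactly. Summing with the previous estimate,
\[
g'(s)\le\frac{n+3}{2n}|\dot\gamma|^2+\frac{n}{n+3}V(\gamma(s))+\frac{n\kappa_2}{(n+3)c_2}\coth(c_2 s)+\frac{3\kappa_1}{n+3}.
\]
Integrating from $s_0$ to $s_1$, the identity $\int\coth(c_2 s)\,ds=c_2^{-1}\log\sinh(c_2 s)$ combined with $c_2^2=n\kappa_2/[2(n+3)^2]$ makes the $\coth$-term collapse to $2(n+3)\log[\sinh(c_2 s_1)/\sinh(c_2 s_0)]$. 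The remaining part of the integrand can be rewritten as
\[
\frac{n+3}{2n}|\dot\gamma|^2+\frac{n}{n+3}V(\gamma)+\frac{3\kappa_1}{n+3}=\frac{n+3}{n}\Big(\tfrac12|\dot\gamma|^2+W(\gamma)\Big),
\]
which is exactly the definition of the Lagrangian appearing in $c_{s_0,s_1}$; this is precisely why $W$, rather than $V$, is the potential in the cost function.

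Taking the infimum over admissible $\gamma$ yields
\[
f_{s_1}(x_1)-f_{s_0}(x_0)\le\Big(1+\tfrac{3}{n}\Big)c_{s_0,s_1}(x_0,x_1)+2(n+3)\log\frac{\sinh(c_2 s_1)}{\sinh(c_2 s_0)},
\]
and substituting $f_s=-2\log\rho_s-U_1$, then exponentiating $-\tfrac12$ times both sides, produces the claimed inequality. The calculation is essentially mechanical; the only delicate points are (i) choosing $\alpha=(n+3)/n$ in Young's inequality so that the horizontal gradient term drops out and the kinetic coefficient matches the factor $(n+3)/n$ in front of the cost, and (ii) verifying that the specific form of $c_2$ is what makes the $\coth$ integral produce the integer exponent $-(n+3)$ on $\sinh(c_2 s_1)/\sinh(c_2 s_0)$. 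The existence of minimizing horizontal curves is not needed, since taking a minimizing sequence in the definition of $c_{s_0,s_1}$ suffices.
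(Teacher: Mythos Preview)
Your argument is correct and follows essentially the same route as the paper: apply Theorem \ref{keythmSasakian} along a horizontal curve, use Young's inequality with the parameter chosen to cancel $|\nabla_\hor f_s|^2$, integrate the $\coth$ term, and recognize the remaining integrand as $(1+3/n)$ times the Lagrangian defining $c_{s_0,s_1}$. The only cosmetic difference is that the paper keeps the factor $(1+3/n)$ on the left and applies Young to $(1+3/n)\langle\nabla_\hor f_t,\dot\gamma\rangle\le \tfrac12|\nabla_\hor f_t|^2+\tfrac12(1+3/n)^2|\dot\gamma|^2$, whereas you divide first; and the paper works with a minimizer of the action while you (slightly more carefully) integrate along an arbitrary admissible curve and pass to the infimum.
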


The structure of the paper is as follows. The main results of this paper are stated in Section 2. In \cite{Le1}, a moving frame argument was used for the proof of Theorem \ref{Lee1} instead of the Bochner formula. The advantage is that a matrix version of Theorem \ref{Lee1}, generalizing the matrix Hamilton-Li-Yau estimate for the heat equation \cite{Ha1}, can be proved using a very similar argument. Although there is no matrix analogue of Theorem \ref{keythmSasakian} in this paper, we show that a version of the moving frame argument is possible in the present setting. This is done in Section 3 and 4. The proofs of the main results are given in Section 5. Section 6 is an appendix devoted to some calculations needed in the proofs.

\smallskip

\section{The main results}

In this section, we give the statements of the main results. First, let us introduce the setup which is essentially the same as that of \cite{BaGa}. 

Let $M$ be a Riemannian manifold and let us fix a distribution $D$ (a vector bundle of the tangent bundle $TM$) of rank $k$. We assume that the orthogonal complement of $D$ is spanned by $n-k$ vector fields denoted by $\Rb_1,...,\Rb_{n-k}$ which satisfy certain symmetry conditions to be specified. We will also assume that the vector field $X_t$ is the horizontal gradient $\nabla_\hor f_t$ of a one-parameter family of functions $f_t$ defined on the manifold $M$ and specialize Lemma \ref{Ncomp} to this case.

Let us call vectors or vector fields which are contained in the distribution $D$ horizontal. Let $\psi_t$ be the flow of a vector field $\Rb$. Assume that $\psi_t$ sends horizontal vector fields to horizontal ones and  preserves their lengths. If $X_1$ and $X_2$ are horizontal vector fields, then we have
\[
\left<(\psi_t)_*(X_1),\mathbf {v}\right>=0\text{ and } \left<(\psi_t)_*(X_1),(\psi_t)_*(X_2)\right>=\left<X_1,X_2\right>.
\]
for any vector field $\mathbf {v}$ which is in the orthogonal complement of $D$.

If we differentiate the above  equations with respect to $t$, then we obtain
\[
\left<[\Rb, X_1],\tilde\Rb\right>=0\text{ and } \left<\nabla_{X_1}\Rb,X_2\right>+\left<X_1, \nabla_{X_2}\Rb\right> =0.
\]

Therefore, we call a vector field $\Rb$ which satisfies the following two conditions horizontal isometry:
\begin{itemize}
\item $[\Rb_i, X_1]$ is horizontal,
\item $\left<\nabla_{X_1}\Rb,X_2\right>+\left<X_1, \nabla_{X_2}\Rb\right> =0$,
\end{itemize}
for all horizontal vector fields $X_1$ and $X_2$.

Let $v$ be a tangent vector of $M$. Then the projections of $v$ onto the distribution $D$ and its orthogonal complement $D^\perp$ are called the horizontal part $v_\hor$ and the vertical part $v_\ver$ of $v$, respectively. Let $f:M\to\Real$ be a smooth function. For the notation convenience, we also denote the horizontal part and vertical part of the gradient $\nabla f$ by $\nabla_\hor f$ and $\nabla_\ver f$, respectively. Let $v_1,...,v_k$ be a frame in $D$ which is orthonormal. Then the sub-Laplacian of $f$ is defined by
\[
\Delta_\hor f=\sum_i\left<\nabla_{v_i}\nabla f,v_i\right>.
\]

Recall that the Ricci curvature $\ric(v,v)$ is defined as the trace of the following operator $w\mapsto \left<\Rm(w,v)v,w\right>$. We define the horizontal Ricci curvature $\ric^\hor$ by
\[
\ric^\hor(v,v)=\sum_i\left<\Rm(v_i,v)v,v_i\right>
\]
and the vertical Ricci curvature $\ric^\ver$ by
\[
\ric^\ver(v,v)=\sum_i\left<\Rm(u_i,v)v,u_i\right>.
\]

Let $\rho_t$ be a smooth positive solution of the following equation
\[
\dot\rho_t=\Delta_\hor\rho_t+\left<\nabla U_1,\nabla_\hor\rho_t\right> +U_2\rho_t+K\rho_t\log\rho_t
\]
where $U_1, U_2$ are smooth functions on $M$ and $K$ is a constant.

Let $f_t$ be the one-parameter family of smooth functions defined by 
\[
f_t=-2\log\rho_t-U_1.
\]
A computation shows that $f_t$ satisfies the following equation
\begin{equation}\label{eqn}
\dot f_t+\frac{1}{2}|\nabla_\hor f_t|^2=\Delta_\hor f_t+ V + Kf_t,
\end{equation}
where $V=\Delta_\hor U_1+KU_1+\frac{1}{2}|\nabla_\hor U_1|^2-2U_2$.

We call a solution $r$ of the problem
\[
\dot r(t)=F(r(t)),\quad r(t)\to\infty \text{ as } t\to 0^+
\]
stable if there is a family of solutions $r_\epsilon$ of the following
\[
\dot r_\epsilon(t)=F(r(t))+\epsilon,\quad r_\epsilon(t)\to\infty \text{ as } t\to 0^+
\]
such that $r_\epsilon$ converges pointwise to $r_\epsilon$.

Finally, recall that a distribution is involutive if the Lie bracket of any two sections in the distribution is again in the distribution. The following is the main result of this paper.

\begin{thm}\label{keythm}
Assume that the orthogonal complement $D^\perp$ of the distribution $D$ is involutive and is given by the span of $n-k$ horizontal isometries. Assume also that the following conditions hold: 
\begin{enumerate}
\item $\ric^\hor(v,v)+3\ric^\ver(v_\hor,v_\hor)\geq K_1|v_\hor|^2 +K_2|v_\ver|^2$,
\item $\ric^\ver(v_\hor,v_\hor)\leq  K_3|v_\hor|^2$,
\item $V\leq K_4$,
\item $\Delta_\hor V+K_5|\nabla_\ver V|^2-2K_1V\leq K_6$,
\item $a_3(t)+\frac{4K_3}{a_2(t)}\geq 0$,
\item $\dot a_1(t)+\frac{2a_1(t)a_4(t)}{k}+(a_1(t)+1)\left(2K_1-a_3(t) -\frac{8K_3}{a_2(t)}\right)=0$,
\item $\frac{\dot a_2(t)}{2}-K_2-4K_3+a_2(t)\left(\frac{a_2(t)}{4K_5}-\frac{a_3(t)}{2}+\frac{a_4(t)}{k} - \frac{K}{2}+K_1\right)=0$,
\item $\dot a_3(t)+\frac{2a_3(t)a_4(t)}{k}+(K-a_3(t))\left(a_3(t)+\frac{8K_3}{a_2(t)}-2K_1\right)=0$.
\end{enumerate}
for some positive constants $K_1, K_2, K_3, K_4, K_5, K_6$.

Let $r(\cdot)$ be a stable solution of
\[
\begin{split}
\dot r(t) &= \frac{a_4(t)^2}{k} +K_6+2\left(a_3(t)+\frac{4K_3}{a_2(t)}\right)K_4\\
&+\left(a_3(t)-\frac{2a_4(t)}{k} + K+\frac{8K_3}{a_2(t)}-2K_1\right)r(t)
\end{split}
\]
with the condition $r(t)\to \infty$ as $t\to 0^+$.

Then
\[
\Delta_\hor f_t(x)+a_1(t)\dot f_t(x)+\frac{a_2(t)}{2}|\nabla_\ver f_t(x)|^2+a_3(t)f_t(x)\leq r(t)
\]
for all $t\geq 0$ and all $x$ in $M$.
\end{thm}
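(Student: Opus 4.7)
The strategy is a parabolic maximum principle applied to the auxiliary quantity
\[
F(t,x) = \Delta_\hor f_t(x) + a_1(t)\dot f_t(x) + \frac{a_2(t)}{2}|\nabla_\ver f_t(x)|^2 + a_3(t) f_t(x).
\]
Writing $L = \Delta_\hor + \langle \nabla_\hor U_1, \nabla_\hor \cdot \rangle$, equation (\ref{eqn}) reads $\dot f_t = L f_t - \tfrac{1}{2}|\nabla_\hor f_t|^2 + V + Kf_t$. The goal is to show that $F$ satisfies a closed differential inequality of the form
\[
(\partial_t - L)F \leq \frac{a_4(t)^2}{k} + K_6 + 2\Big(a_3(t) + \tfrac{4K_3}{a_2(t)}\Big)K_4 + \Big(a_3(t) - \tfrac{2a_4(t)}{k} + K + \tfrac{8K_3}{a_2(t)} - 2K_1\Big)F,
\]
and then compare $F$ with $r$ via the maximum principle, the singular initial data $r(t)\to\infty$ providing the lower barrier.

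The first step is to compute $(\partial_t - L)F$ by differentiating each of the four summands. This requires sub-Riemannian Bochner-type identities for $L|\nabla_\hor f_t|^2$ and $L|\nabla_\ver f_t|^2$, which bring in horizontal and vertical Hessians together with the curvatures $\ric^\hor$ and $\ric^\ver$. This is where the moving-frame machinery of Sections~3--4 is essential: the horizontal-isometry hypothesis on the generators $\mathbf{w}_1,\dots,\mathbf{w}_{n-k}$ of $D^\perp$ ensures that every commutator $[\mathbf{w}_i,X]$ with $X$ horizontal remains horizontal and acts antisymmetrically on the horizontal metric, which keeps the mixed horizontal/vertical derivative terms under control and prevents them from leaking into uncontrolled vertical Hessians.

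Next, substitute the curvature bounds (1)--(4) into the resulting expression: (1) dominates the $\ric^\hor$ contributions, (2) controls the cross curvature term (giving rise to the $K_3/a_2$ factors), while (3)--(4) eliminate the potential $V$ and its Laplacian in favour of $K_4$, $K_5$, $K_6$. The residual quadratic content is a term $-2|\nabla_\hor^2 f_t|^2$ together with a mixed $-|\nabla_\ver\nabla_\hor f_t|^2$-type term. Using $(\Delta_\hor f_t)^2 \leq k|\nabla_\hor^2 f_t|^2$ converts the Hessian term into $(\Delta_\hor f_t)^2$, and completing the square introduces a free slack parameter, which is precisely $a_4(t)$, producing the $a_4^2/k$ term on the right. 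The remaining first-order contributions, namely the coefficients of $\Delta_\hor f_t$, $\dot f_t$, $|\nabla_\ver f_t|^2$ and $f_t$, are required to reassemble into a multiple of $F$ itself. Direct inspection shows that this matching requirement is exactly the ODE system (5)--(8): condition (5) is the sign condition needed for the reassembled coefficient of $|\nabla_\ver f_t|^2$ to be non-negative, while (6)--(8) are the linear coefficient-matching ODEs for $a_1$, $a_2$, $a_3$, with $a_4$ entering as the free slack.

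Finally, set $u = F - r$. The ODE for $r$ cancels the inhomogeneous terms above, leaving $(\partial_t - L)u \leq \lambda(t)u$ with $\lambda(t) = a_3(t) - 2a_4(t)/k + K + 8K_3/a_2(t) - 2K_1$. Multiplying by the integrating factor $\exp\bigl(-\int_0^t \lambda\bigr)$ reduces the comparison to a standard sub-elliptic parabolic inequality on $M$, to which the strong maximum principle applies. Because $r(t) \to \infty$ as $t \to 0^+$ while $F$ stays bounded on any slab $[\delta,T]\times M$, no positive maximum of $u$ can occur. To handle the singularity at $t=0$ rigorously, I would first establish $F \leq r_\epsilon$ against the $\epsilon$-perturbed ODE (where a strict slack in the parabolic inequality is available) and then take $\epsilon\to 0$ via the stability hypothesis on $r$. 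The main obstacle will be the algebraic bookkeeping in the first step: correctly evaluating $(\partial_t - L)$ on the mixed horizontal/vertical quantities using the moving-frame formalism, and verifying that the four ODEs (5)--(8) are exactly what is needed to close the inequality. The sub-Riemannian Bochner computations and the interaction of horizontal isometries with vertical Hessians, deferred to Sections~3, 4 and 6, carry the real technical weight of the proof.
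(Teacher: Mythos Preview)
Your high-level strategy---define $F$, derive a closed differential inequality involving $\Delta_\hor F$ and a linear term in $F$, then compare with the barrier $r_\epsilon$ by the maximum principle and pass to the limit via stability---is exactly what the paper does. Two points of divergence are worth flagging. First, the paper does not compute $(\partial_t-L)F$ with your fixed operator $L=\Delta_\hor+\langle\nabla_\hor U_1,\nabla_\hor\cdot\rangle$; instead it works in Lagrangian coordinates, taking $\varphi_t$ to be the flow of $\nabla_\hor f_t$ and computing $\frac{d}{dt}F_t(\varphi_t)$. The drift is thus $\nabla_\hor f_t$, not $\nabla_\hor U_1$, and the $\Delta_\hor F$ term appears on the right-hand side rather than being built into the operator. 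At a spatial maximum these viewpoints coincide, so your Eulerian version would close equally well, but the drift you wrote down is the wrong one and will not produce the clean cancellations you need in the intermediate algebra. The paper's Lagrangian framing is also what lets it replace the sub-Riemannian Bochner identity by the matrix Riccati equation for the parallel adapted frame (Lemmas~\ref{N}--\ref{keylem}); this is a deliberate methodological choice, advertised in the introduction as the point of the moving-frame argument, whereas you invoke the moving frames only as a tool inside a Bochner computation. Second, a small bookkeeping slip: condition~(5) is not the sign condition on the $|\nabla_\ver f_t|^2$ coefficient (that coefficient is killed exactly by the ODE~(7)); rather, $a_3+4K_3/a_2\geq 0$ is what allows one to replace $V$ by $K_4$ after assumption~(4) has absorbed $\Delta_\hor V+K_5|\nabla_\ver V|^2$ into $K_6+2K_1V$.
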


In the case $\dot\rho_t=\Delta_\hor\rho_t$, the proof of Theorem \ref{keythm} gives the following result which is one of the results in \cite{BaGa}. Note that, unlike \cite{BaGa}, the curvature conditions of the following result is written using a completely Riemannian notations.

\begin{cor}\label{keycor1}
Assume that the orthogonal complement $D^\perp$ of the distribution $D$ is involutive and is given by the span of $n-k$ horizontal isometries. Assume also that the following conditions hold:

\begin{enumerate}
\item $\ric^\hor(v,v)+3\ric^\ver(v_\hor,v_\hor)\geq K_2|v_\ver|^2$,
\item $\ric^\ver(v_\hor,v_\hor)\leq  K_3|v_\hor|^2$,
\end{enumerate}
for some positive constants $K_2, K_3$. Let $\rho_t$ be a smooth positive solution of the equation 
\[
\dot \rho_t=\Delta_\hor\rho_t
\]
and let $f_t=-2\log\rho_t$. Then
\[
\begin{split}
\Delta_\hor f_t(x)+c\dot f_t(x)+&2t\left(K_2-\frac{4K_3}{c}\right)|\nabla_\ver f_t(x)|^2\\
&\leq\frac{4(c+1)^2kK_3^2}{(cK_2-4K_3)(8K_3-cK_2)t}
\end{split}
\]
for all $c>\frac{4K_3}{K_2}$, all $t\geq 0$, and all $x$ in $M$.
\end{cor}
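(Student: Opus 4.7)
The plan is to apply Theorem~\ref{keythm} with $U_1=U_2=0$ and $K=0$. In this regime $V\equiv 0$, so hypothesis~(3) of Theorem~\ref{keythm} holds with $K_4=0$, hypothesis~(4) holds with $K_6=0$ for any $K_5>0$, and the curvature assumption of the corollary is hypothesis~(1) of Theorem~\ref{keythm} with $K_1=0$; also $f_t=-2\log\rho_t-U_1$ becomes $-2\log\rho_t$. Since $V\equiv 0$ leaves $K_5$ unconstrained, I would let $K_5\to\infty$ so that the $a_2(t)^2/(4K_5)$ term in equation~(7) of Theorem~\ref{keythm} drops out of the analysis.

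Next, I would make the ansatz $a_1(t)\equiv c$ and $a_3(t)\equiv 0$, motivated by the shape of the estimate one wants to prove. With these choices, condition~(5) of Theorem~\ref{keythm} is automatic (since $K_3,a_2>0$) and equation~(8) holds trivially (using $K=0$ and $a_3=0$). Equation~(6) then reduces to an algebraic relation
\[
a_4(t)=\frac{4k(c+1)K_3}{c\,a_2(t)},
\]
and substituting this into equation~(7) gives the linear ODE $\dot a_2(t)=2\bigl(K_2-4K_3/c\bigr)$ with initial datum zero; the resulting $a_2(t)=2\bigl(K_2-4K_3/c\bigr)t$ is positive for $t>0$ exactly when $c>4K_3/K_2$, which is the standing hypothesis of the corollary.

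Finally, the ODE for $r(t)$ collapses under these simplifications to $\dot r=a_4^2/k+\bigl(-2a_4/k+8K_3/a_2\bigr)r$. Since $a_2\sim t$ and $a_4\sim 1/t$, the source term is of order $t^{-2}$ and the drift coefficient of order $t^{-1}$, so the ansatz $r(t)=A/t$ reduces this to a single algebraic equation for $A$, whose solution is exactly the constant on the right-hand side of Corollary~\ref{keycor1}. Stability of this blow-up solution is a routine variation-of-parameters check on the perturbed ODE.

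I do not foresee any deep obstacle; the corollary is a direct specialization of Theorem~\ref{keythm}. The only point requiring some care is the passage $K_5\to\infty$, which is harmless because hypothesis~(4) of Theorem~\ref{keythm} becomes independent of $K_5$ once $V\equiv 0$; it can be justified either by continuity in the final inequality, or by running the whole argument with a large but finite $K_5$ and then letting it tend to infinity. All the remaining steps are routine manipulations of the ODE system for $a_1,a_2,a_3,a_4,r$.
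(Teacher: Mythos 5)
Your proposal is correct and is essentially the paper's own proof: the paper likewise derives Corollary \ref{keycor1} by specializing Theorem \ref{keythm} with $K_1=K=0$, $a_1\equiv c$, $a_3\equiv 0$, $a_4(t)=\frac{4(c+1)kK_3}{c\,a_2(t)}$, $a_2(t)=2\left(K_2-\frac{4K_3}{c}\right)t$, and $r(t)=\frac{4(c+1)^2kK_3^2}{(cK_2-4K_3)(8K_3-cK_2)t}$. The only difference is that you explicitly justify discarding the $\frac{a_2(t)^2}{4K_5}$ term (via $K_5\to\infty$, harmless since $V\equiv 0$), a point the paper passes over silently.
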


If we set $c=\frac{6K_3}{K_2}$ in Corollary \ref{keycor1}, then the result further simplified to the following. 

\begin{cor}\label{keycor2}
Suppose that the assumptions in Corollary \ref{keycor1} hold. Then
\[
\begin{split}
\Delta_\hor f_t(x)+\frac{6K_3}{K_2}\dot f_t(x)+&\frac{2K_2t}{3}|\nabla_\ver f_t(x)|^2\leq\frac{k\left(1+\frac{6K_3}{K_2}\right)^2}{t}
\end{split}
\]
for all $t\geq 0$ and all $x$ in $M$.
\end{cor}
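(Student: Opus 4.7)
The plan is simply to specialize Corollary \ref{keycor1} to the choice $c=\frac{6K_3}{K_2}$, which is admissible since $\frac{6K_3}{K_2}>\frac{4K_3}{K_2}$ and is therefore in the range required by the hypotheses of that corollary. So all that remains is to verify that the coefficient of $|\nabla_\ver f_t|^2$ on the left-hand side and the constant appearing on the right-hand side simplify to the stated expressions.

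For the vertical term, a direct computation gives
\[
2t\left(K_2-\frac{4K_3}{c}\right)=2t\left(K_2-\frac{4K_3 K_2}{6K_3}\right)=2t\left(K_2-\frac{2K_2}{3}\right)=\frac{2K_2 t}{3},
\]
which matches the coefficient appearing in Corollary \ref{keycor2}. For the right-hand side of Corollary \ref{keycor1}, with $c=\frac{6K_3}{K_2}$ both factors in the denominator collapse to $2K_3$, namely $cK_2-4K_3=6K_3-4K_3=2K_3$ and $8K_3-cK_2=8K_3-6K_3=2K_3$, so
\[
\frac{4(c+1)^2 k K_3^2}{(cK_2-4K_3)(8K_3-cK_2)\,t}=\frac{4(c+1)^2 k K_3^2}{4K_3^2\,t}=\frac{k\left(1+\frac{6K_3}{K_2}\right)^2}{t},
\]
as claimed.

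There is no real obstacle here; the corollary is purely a consequence of inserting the optimal (or at least a convenient) value of $c$ into the already-proved one-parameter family of estimates in Corollary \ref{keycor1}. The only thing worth remarking on is that $c=\frac{6K_3}{K_2}$ is the midpoint of the admissible interval $\bigl(\frac{4K_3}{K_2},\frac{8K_3}{K_2}\bigr)$, and it is precisely this choice that balances the two factors $cK_2-4K_3$ and $8K_3-cK_2$ in the denominator, thereby minimizing the constant on the right-hand side of the resulting estimate.
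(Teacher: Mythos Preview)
Your proof is correct and matches the paper's intended derivation: the sentence immediately preceding Corollary~\ref{keycor2} says exactly that it is obtained by setting $c=\frac{6K_3}{K_2}$ in Corollary~\ref{keycor1}, and you have carried out the two substitutions cleanly. (Note that the block labeled ``Proof of Corollary~\ref{keycor2}'' in the paper is evidently mislabeled---its use of $K_4,K_5,K_6$, $\tanh$, and $\coth$ shows it is really the proof of Corollary~\ref{keycor3}---so the paper's actual argument for Corollary~\ref{keycor2} is just the one-line remark you have expanded.)
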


In the case $\dot\rho_t=\Delta_\hor\rho_t+\left<\nabla U_1,\nabla_\hor\rho_t\right> +U_2\rho_t$, Theorem \ref{keythm} gives

\begin{cor}\label{keycor3}
Assume that the orthogonal complement $D^\perp$ of the distribution $D$ is involutive and is given by the span of $n-k$ horizontal isometries. Assume also that the following conditions hold:
\begin{enumerate}
\item $\ric^\hor(v,v)+3\ric^\ver(v_\hor,v_\hor)\geq K_2|v_\ver|^2$,
\item $\ric^\ver(v_\hor,v_\hor)\leq  K_3|v_\hor|^2$,
\item $V\leq K_4$,
\item $\Delta_\hor V+K_5|\nabla_\ver V|^2\leq K_6$,
\item $K_5=\frac{4k(c+1)^2K_3^2}{cK_6(8K_3-cK_2)}$,
\end{enumerate}
for some positive constants $K_2, K_3, K_4, K_6$ and $c>\frac{4K_3}{K_2}$. Let $\rho_t$ be a smooth positive solution of the equation
\[
\dot \rho_t=\Delta_\hor\rho_t+\left<\nabla U_1,\nabla_\hor\rho_t\right> +U_2\rho_t,
\]
$f_t=-2\log\rho_t-U_1$, $c_1=2\sqrt{K_5\left(K_2-\frac{4K_3}{c}\right)}$, and $c_2=\sqrt{\frac{cK_2-4K_3}{cK_5}}$. Then
\[
\Delta_\hor f_t(x)+c\dot f_t(x)+\frac{c_1\tanh(c_2t)}{2}|\nabla_\ver f_t(x)|^2\leq \frac{K_6}{c_2}\coth(c_2t)+cK_4
\]
for all $t\geq 0$ and all $x$ in $M$.
\end{cor}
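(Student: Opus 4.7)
The plan is to derive Corollary \ref{keycor3} from Theorem \ref{keythm} by specializing the parameters appropriately. Since the diffusion equation has no $\rho\log\rho$ term, I would set $K=0$, and since hypothesis (1) of the corollary omits the $K_1|v_\hor|^2$ lower bound, I would take $K_1=0$. The conclusion features a constant coefficient $c$ in front of $\dot f_t$ and no explicit $f_t$ term, which strongly suggests the ansatz $a_1(t)\equiv c$ and $a_3(t)\equiv 0$. With these choices, condition (8) of Theorem \ref{keythm} is automatically satisfied (both sides vanish), and condition (6) algebraically determines
\[
a_4(t)=\frac{4kK_3(c+1)}{c\,a_2(t)}.
\]

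Substituting this $a_4$ into condition (7) and using $K=K_1=a_3=0$ collapses that equation into the scalar Riccati equation
\[
\dot a_2 = 2\left(K_2-\frac{4K_3}{c}\right)-\frac{a_2^2}{2K_5},
\]
whose stable solution with $a_2(0^+)=0$ is $a_2(t)=c_1\tanh(c_2 t)$, where $c_1,c_2$ are exactly the constants in the statement; positivity of the radicand requires $c>4K_3/K_2$, which is the standing hypothesis. Condition (5) then reads $4K_3/a_2(t)\geq 0$, which clearly holds.

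It remains to solve the linear first-order ODE for $r(t)$ prescribed by Theorem \ref{keythm}. Motivated by the blow-up requirement $r(t)\to\infty$ as $t\to 0^+$ and the fact that all coefficients in that ODE are polynomials in $\coth(c_2 t)$, I would try the ansatz $r(t)=\frac{K_6}{c_2}\coth(c_2 t)+cK_4$. Direct substitution and matching of terms of equal degree in $\coth(c_2 t)$ produces a constant identity and a quadratic-in-$\coth$ identity; using the relations $c_1 c_2=\frac{2(cK_2-4K_3)}{c}$ and $c_1^2=\frac{4K_5(cK_2-4K_3)}{c}$, the remaining algebraic compatibility reduces precisely to hypothesis (5) of the corollary, namely $K_5=\frac{4k(c+1)^2K_3^2}{cK_6(8K_3-cK_2)}$. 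Stability is automatic because the $r$-equation is linear, so any $\epsilon$-perturbation yields a nearby solution converging back pointwise. Inserting $a_1=c$, $a_2=c_1\tanh(c_2 t)$, $a_3=0$, and this $r$ into the conclusion of Theorem \ref{keythm} delivers the asserted estimate.

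The main obstacle is not conceptual but bookkeeping: the Riccati equation for $a_2$ and the linear equation for $r$ conspire, through the fixed definitions of $c_1$ and $c_2$, to reproduce exactly the algebraic relation (5) between $K_5, K_6$ and the curvature constants. Verifying this compatibility is routine once the correct ansatz has been guessed, but the real subtlety is identifying the right functional forms of $a_1, a_3, a_4$, and $r$ that make the coupled nonlinear system integrable in closed form; without this insight, the eight conditions of Theorem \ref{keythm} look hopelessly overdetermined.
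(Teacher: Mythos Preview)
Your proposal is correct and follows essentially the same approach as the paper: you specialize Theorem \ref{keythm} with $K=K_1=0$, $a_1\equiv c$, $a_3\equiv 0$, solve condition (6) for $a_4=\frac{4kK_3(c+1)}{c\,a_2}$, reduce condition (7) to the Riccati equation solved by $a_2(t)=c_1\tanh(c_2 t)$, and verify $r(t)=\frac{K_6}{c_2}\coth(c_2 t)+cK_4$. In fact you supply more detail than the paper's one-sentence proof, including the explicit check that the compatibility between the $r$-equation and the chosen $a_2$ is precisely hypothesis (5) of the corollary.
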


Next, we recall the definition of Sasakian manifolds and show that Theorem \ref{keythmSasakian} is a consequence of Corollary \ref{keycor3}. For a more detail discussion of Sasakian manifolds, see \cite{Bl}.

Let $M$ be a $2n+1$ dimensional manifold. A 1-form $\alpha$ on $M$ is contact if $d\alpha_x$ is a non-degenerate 2-form on the kernel $D$ of $\alpha$ (i.e.
\[
D_x=\{v\in T_xM|\alpha(v)=0\}
\]
for each $x$).

Let $J$ be a $(1,1)$-tensor, $\Rb$ be a vector field, and $\alpha$ be a contact 1-form on $M$. The triple $(J,\Rb,\alpha)$ is an almost contact structure of $M$ if the following conditions hold
\begin{equation}\label{almostcontact}
J^2(v)=-v,\quad \alpha(\Rb)=1,\quad  J(\Rb)=0,
\end{equation}
where $v$ is any vector in $D$.

An almost contact structure $(J,\Rb,\alpha)$ is normal if
\begin{equation}\label{normal}
[J,J](w_1,w_2)+d\alpha(w_1,w_2)\Rb=0
\end{equation}
for any vector fields $w_1$ and $w_2$ on $M$, where $[J,J]$ denotes the Nijenhuis tensor defined by
\begin{equation}\label{Nijen}
[J,J](w_1,w_2)=J^2[w_1,w_2]+[Jw_1,Jw_2]-J[Jw_1,w_2]-J[w_1,Jw_2].
\end{equation}

An almost contact structure $(J,\Rb,\alpha)$ together with a Riemannian metric $\left<\cdot,\cdot\right>$ is called a almost contact metric structure if
\[
\left<Jv_1,Jv_2\right>=\left<v_1,v_2\right>\quad\text{ and } \quad \left<\Rb,\Rb\right>=1,
\]
where $v_1$ and $v_2$ are vectors in $D$.

An almost contact metric structure is a contact metric structure if $\left<w_1,Jw_2\right>=d\alpha(w_1,w_2)$. A Sasakian manifold is a manifold $M$ equipped with a contact metric structure which is normal.

An example of Sasakian manifolds is given by the Heisenberg group. The underlying space $M$ of the Heisenberg group is the $2n+1$ dimensional Euclidean space $\Real^{2n+1}$. In this case, the contact form $\alpha$ is given by
\[
\alpha=dz-\frac{1}{2}\sum_{i=1}^n\left(y_idx_i-x_idy_i\right)
\]
where $\{x_1,...,x_n,y_1,...,y_n,z\}$ are coordinates on $\Real^{2n+1}$.

In the Heisenberg group, the vector field $\Rb$ is given by $\Rb=\partial_z$. Let $X_i=\partial_{x_i}+\frac{1}{2}y_i\partial_z$ and $Y_i=\partial_{y_i}-\frac{1}{2}x_i\partial_z$. The Riemannian metric is defined such that $\{X_1,...,X_n,Y_1,...,Y_n,\Rb\}$ is an orthonormal frame. The tensor $J$ is defined by
\[
JX_i=-Y_i,\quad JY_i=X_i,\quad\text{ and }\quad J\Rb =0.
\]

Back to the general case, the Riemann curvature tensor $\Rm$ of a Sasakian manifold satisfies the following properties.

\begin{prop}\label{SasakianProp}
Let $(M, J, \Rb,\alpha,\left<\cdot,\cdot\right>)$ be a Sasakian manifold. Then the followings hold:
\begin{enumerate}
\item $(\nabla_{w_1} J)w_2=\frac{1}{2}\left(\left<w_1,w_2\right>\Rb-\left<\Rb,w_2\right>w_1\right)$,
\item $\nabla_{w_1}\Rb=-\frac{1}{2}Jw_1$,
\item $\Rb$ is a Killing vector field,
\item $\Rm(w_1,w_2)\Rb=\frac{1}{4}\left(\left<\Rb,w_2\right>w_1-\left<\Rb,w_1\right>w_2\right)$,
\end{enumerate}
for all tangent vectors $w_1$ and $w_2$ on $M$.
\end{prop}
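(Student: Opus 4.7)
The plan is to establish (1) first---which is the main computational task---and to derive (2), (3), (4) from it by direct substitution.

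For (1), I would exploit the fact that the 2-form $\Phi(w_1, w_2) := \left<w_1, Jw_2\right>$ coincides with $d\alpha$ by the contact metric condition, hence is closed. Expanding $d\Phi \equiv 0$ using the torsion-free Levi-Civita connection yields the cyclic identity
\[
\left<w_2, (\nabla_{w_1}J)w_3\right> + \left<w_3, (\nabla_{w_2}J)w_1\right> + \left<w_1, (\nabla_{w_3}J)w_2\right> = 0.
\]
Combined with the skew-symmetry $\left<(\nabla_w J)v_1, v_2\right> = -\left<v_1, (\nabla_w J)v_2\right>$ (obtained by differentiating $\left<Jv_1, v_2\right> = -\left<v_1, Jv_2\right>$), this constrains the antisymmetric part of $\nabla J$. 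The normality condition $[J,J] + d\alpha \otimes \Rb = 0$, written out via the Nijenhuis formula (\ref{Nijen}) and the identity $[X,Y] = \nabla_X Y - \nabla_Y X$, supplies the remaining information needed to solve for $\nabla_{w_1} J$ explicitly. A case check on inputs split into horizontal and vertical components (using $J\Rb = 0$, $J^2 = -\mathrm{Id}$ on $D$, and $\alpha(w) = \left<\Rb, w\right>$) then pins the formula down.

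Statement (2) follows from (1) by setting $w_2 = \Rb$: since $J\Rb = 0$, the left side becomes $-J\nabla_{w_1}\Rb$, while (1) gives $(\nabla_{w_1}J)\Rb = \tfrac{1}{2}(\alpha(w_1)\Rb - w_1)$. Because $\left<\Rb, \Rb\right> \equiv 1$ forces $\nabla_{w_1}\Rb$ to lie in $D$, applying $J$ to both sides and using $J^2 = -\mathrm{Id}$ on $D$ yields $\nabla_{w_1}\Rb = -\tfrac{1}{2}Jw_1$. Statement (3) is then immediate: $\left<\nabla_{w_1}\Rb, w_2\right> + \left<w_1, \nabla_{w_2}\Rb\right> = -\tfrac{1}{2}(\left<Jw_1, w_2\right> + \left<w_1, Jw_2\right>) = 0$ by the skew-symmetry of $J$.

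For (4), I would expand
\[
\Rm(w_1, w_2)\Rb = \nabla_{w_1}\nabla_{w_2}\Rb - \nabla_{w_2}\nabla_{w_1}\Rb - \nabla_{[w_1, w_2]}\Rb
\]
using (2); the torsion-free identity $\nabla_{w_1}w_2 - \nabla_{w_2}w_1 = [w_1,w_2]$ causes the $J\nabla w$ contributions to cancel against $\nabla_{[w_1,w_2]}\Rb$, leaving $-\tfrac{1}{2}\bigl((\nabla_{w_1}J)w_2 - (\nabla_{w_2}J)w_1\bigr)$. Inserting (1) kills the symmetric $\left<w_1,w_2\right>\Rb$ term and leaves $\tfrac{1}{4}(\alpha(w_2)w_1 - \alpha(w_1)w_2)$, matching the claim via $\alpha(w) = \left<\Rb, w\right>$. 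The main obstacle is establishing (1) cleanly: the algebra is standard but requires simultaneously juggling the closedness of $d\alpha$, the normality of the almost contact structure, the skew-symmetry of $\nabla J$, and the horizontal/vertical decomposition to recover the precise symmetric-plus-vertical form on the right-hand side.
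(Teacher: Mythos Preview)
The paper does not supply its own proof of this proposition; immediately after the statement it writes ``A proof of the above proposition can be found in \cite{Bl}'' and only remarks on the normalization of $d\alpha$. So there is nothing in the paper to compare against beyond that citation.

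Your outline is mathematically sound and is essentially the standard argument one finds in Blair's book: derive the formula for $\nabla J$ from the two ingredients $d\Phi=0$ (closedness of the fundamental $2$-form, giving the cyclic identity on $\nabla J$) and normality (the Nijenhuis relation rewritten through $\nabla$), then read off (2)--(4). Your deductions of (2), (3), (4) from (1) are correct as written; in particular the cancellation in (4) via torsion-freeness is exactly right. The only part left genuinely at the level of a sketch is (1) itself---``a case check on inputs split into horizontal and vertical components\ldots pins the formula down'' hides the bulk of the algebra---but that is precisely the computation Blair carries out, so citing \cite{Bl} there (as the paper does) is entirely acceptable.
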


A proof of the above proposition can be found in \cite{Bl}. Note that the definition of the exterior differential $d\alpha$ of a differential 2-form $\alpha$ used in this paper is
\[
d\alpha(w_1,w_2)=w_1(\alpha(w_2))-w_2(\alpha(w_1))-\alpha([w_1,w_2]).
\]
This is different from the one in \cite{Bl} and so the above formulas are also different from those in \cite{Bl} by a multiplicative constant.

Recall that the Tanaka connection $\bar\nabla$ of a given almost contact metric manifold is given by
\[
\begin{split}
\bar\nabla_{w_1}w_2&=\nabla_{w_1}w_2+\frac{1}{2}\left<\Rb,w_1\right>Jw_2 +\frac{1}{2}\left<\Rb,w_2\right>Jw_1-\frac{1}{2}\left<Jw_1,w_2\right>\Rb.
\end{split}
\]
The corresponding curvature $\overline\Rm$ is given by
\[
\overline\Rm(w_1,w_2)w_3=\bar\nabla_{w_1}\bar\nabla_{w_2}w_3-\bar\nabla_{w_2}\bar\nabla_{w_1}w_3-\bar\nabla_{[w_1,w_2]}w_3
\]
and we denote by $\overline\ric$ the corresponding Ricci curvature
\[
\overline\ric(w_1,w_1)=\text{trace}(w_2\mapsto\left<\overline\Rm(w_2,w_1)w_1,w_2\right>).
\]

The following proposition shows that Sasakian manifolds provide examples to the main results.

\begin{prop}\label{Sasak}
The followings hold on a Sasakian manifold:
\begin{enumerate}
\item $\ric^\ver(v,v)=\ric^\ver(v_\hor,v_\hor)=\frac{1}{4}|v_\hor|^2$,
\item $\ric^\hor(v,v)+3\ric^\ver(v_\hor,v_\hor)=\ric^\hor(v,v)+\frac{3}{4}|v_\hor|^2\\
=\frac{n}{2}|v_\ver|^2+\overline\ric(v_\hor, v_\hor)$.
\end{enumerate}
\end{prop}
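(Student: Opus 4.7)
The plan is to verify each identity by reducing to the standard Riemann curvature $\Rm$ using Proposition \ref{SasakianProp}, then to compute the Tanaka--Webster Ricci curvature $\overline\ric$ by expanding the difference $\bar\nabla-\nabla$ explicitly.

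For (1), since the orthogonal complement of $D$ is one-dimensional and spanned by $\Rb$, the vertical Ricci is just one term:
\[
\ric^\ver(v,v)=\langle \Rm(\Rb,v)v,\Rb\rangle=\langle \Rm(v,\Rb)\Rb,v\rangle
\]
by the pair-swap symmetry of $\Rm$. Applying Proposition \ref{SasakianProp}(4) with $w_1=v$, $w_2=\Rb$ gives $\Rm(v,\Rb)\Rb=\tfrac14(v-\langle v,\Rb\rangle\Rb)=\tfrac14 v_\hor$, so $\langle \Rm(v,\Rb)\Rb,v\rangle=\tfrac14|v_\hor|^2$. Since the result depends only on $v_\hor$, we automatically get $\ric^\ver(v,v)=\ric^\ver(v_\hor,v_\hor)=\tfrac14|v_\hor|^2$.

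For the first equality in (2), I split $v=v_\hor+v_\ver$ and expand $\ric^\hor(v,v)$ into four pieces. The two cross-terms $\sum_i\langle\Rm(v_i,v_\hor)v_\ver,v_i\rangle$ and $\sum_i\langle\Rm(v_i,v_\ver)v_\hor,v_i\rangle$ both vanish: using Proposition \ref{SasakianProp}(4) with two horizontal inputs, $\Rm(v_i,v_\hor)\Rb=0$, hence $\Rm(v_i,v_\hor)v_\ver=0$, and the symmetry $\Rm(X,Y,Z,W)=\Rm(Z,W,X,Y)$ disposes of the other term. The purely vertical part yields $\sum_{i=1}^{2n}\langle \Rm(v_i,\Rb)\Rb,v_i\rangle=\tfrac{2n}{4}=\tfrac{n}{2}$ (again by Proposition \ref{SasakianProp}(4)), giving $\ric^\hor(v_\ver,v_\ver)=\tfrac{n}{2}|v_\ver|^2$. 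Together,
\[
\ric^\hor(v,v)=\ric^\hor(v_\hor,v_\hor)+\tfrac{n}{2}|v_\ver|^2,
\]
and combining with (1) we get the decomposition
\[
\ric^\hor(v,v)+3\ric^\ver(v_\hor,v_\hor)=\ric^\hor(v_\hor,v_\hor)+\tfrac{n}{2}|v_\ver|^2+\tfrac{3}{4}|v_\hor|^2.
\]

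The last step, and the main computational obstacle, is to identify $\ric^\hor(v_\hor,v_\hor)+\tfrac{3}{4}|v_\hor|^2$ with $\overline\ric(v_\hor,v_\hor)$. Writing $\bar\nabla=\nabla+T$ for the contorsion tensor
\[
T(w_1,w_2)=\tfrac12\langle\Rb,w_1\rangle Jw_2+\tfrac12\langle\Rb,w_2\rangle Jw_1-\tfrac12\langle Jw_1,w_2\rangle\Rb,
\]
I will apply the standard identity
\[
\overline\Rm(X,Y)Z=\Rm(X,Y)Z+(\nabla_X T)(Y,Z)-(\nabla_Y T)(X,Z)+T(X,T(Y,Z))-T(Y,T(X,Z))-T(T_0(X,Y),Z),
\]
where $T_0$ denotes the antisymmetrization of $T$ (so that $[\bar\nabla_X,\bar\nabla_Y]-\bar\nabla_{[X,Y]}$ unpacks correctly). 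The covariant derivatives of $T$ are computable from Proposition \ref{SasakianProp}(1)--(2): $\nabla J$ is explicit, and $\nabla\Rb=-\tfrac12 J$. For horizontal $v_\hor$, the key observations are that $\bar\nabla$ preserves horizontality and that $\bar\nabla_{e_i}\Rb=0$, which isolates the horizontal sum in $\overline\ric(v_\hor,v_\hor)=\sum_i\langle\overline\Rm(v_i,v_\hor)v_\hor,v_i\rangle+\langle\overline\Rm(\Rb,v_\hor)v_\hor,\Rb\rangle$. Expanding each piece, the $\Rm$ part reassembles into $\ric^\hor(v_\hor,v_\hor)$, the $\nabla T$ terms collapse via the explicit form of $\nabla J$, and the quadratic $T\circ T$ terms give a contribution proportional to $|v_\hor|^2$. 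Keeping track of the numerical coefficients (the main risk of error lies here), the net correction is exactly $+\tfrac{3}{4}|v_\hor|^2$. Substituting this into the displayed decomposition yields the second identity of (2).
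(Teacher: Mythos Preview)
Your treatment of (1) and of the decomposition $\ric^\hor(v,v)=\ric^\hor(v_\hor,v_\hor)+\tfrac{n}{2}|v_\ver|^2$ is correct and essentially identical to the paper's argument (the paper packages the latter as the identity $\langle\Rm(w_1,v)v,w_2\rangle=\langle\Rm(w_1,v_\hor)v_\hor,w_2\rangle+\tfrac{|v_\ver|^2}{4}\langle w_1,w_2\rangle$).

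Where you diverge is in relating $\ric^\hor(v_\hor,v_\hor)$ to $\overline\ric(v_\hor,v_\hor)$. The paper does not go through a general curvature formula for $\bar\nabla=\nabla+T$; instead it exploits the fact that $\bar\nabla_XY$ is always horizontal and computes each of the three pieces $\langle\bar\nabla_{v_i}\bar\nabla_{v_\hor}v_\hor,v_j\rangle$, $\langle\bar\nabla_{v_\hor}\bar\nabla_{v_i}v_\hor,v_j\rangle$, $\langle\bar\nabla_{[v_i,v_\hor]}v_\hor,v_j\rangle$ by hand. This yields the pointwise identity $\langle\overline\Rm(v_i,v_\hor)v_\hor,v_j\rangle=\langle\Rm(v_i,v_\hor)v_\hor,v_j\rangle+\tfrac34\langle Jv,v_i\rangle\langle Jv,v_j\rangle$, whose trace over $i=j$ is $\tfrac34|v_\hor|^2$, and a separate one-line check that $\langle\overline\Rm(\Rb,v_\hor)v_\hor,\Rb\rangle=0$. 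Your contorsion route is legitimate in principle, but as written it has two problems. First, the curvature identity you state is wrong: for $\bar\nabla=\nabla+T$ with $\nabla$ torsion-free one has simply
\[
\overline\Rm(X,Y)Z=\Rm(X,Y)Z+(\nabla_XT)(Y,Z)-(\nabla_YT)(X,Z)+T(X,T(Y,Z))-T(Y,T(X,Z));
\]
the extra $-T(T_0(X,Y),Z)$ term should not be there (the $T(\nabla_XY,Z)-T(\nabla_YX,Z)$ contribution already cancels against the $T$-part of $\bar\nabla_{[X,Y]}Z$). If you kept that spurious term it would contribute an extra $-\tfrac12|v_\hor|^2$ to the trace and spoil the coefficient. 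Second, you never actually carry out the bookkeeping; ``the net correction is exactly $+\tfrac34|v_\hor|^2$'' is asserted, not shown. Either fix the formula and do the computation, or follow the paper's direct term-by-term approach, which sidesteps the contorsion algebra entirely.
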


\begin{proof}
Clearly, we have
\[
\begin{split}
\ric^\ver(v,v)&=\left<\Rm(\Rb,v)v,\Rb\right>\\
&=\left<\Rm(\Rb,v_\hor)v_\hor,\Rb\right>=\ric^\ver(v_\hor,v_\hor).
\end{split}
\]

By Proposition \ref{SasakianProp}, we also have
\[
\begin{split}
\left<\Rm(\Rb,v_\hor)v_\hor,\Rb\right>&=\left<\Rm(v_\hor,\Rb)\Rb,v_\hor\right>=\frac{1}{4}|v_\hor|^2.
\end{split}
\]

By Proposition \ref{SasakianProp} again, we have
\begin{equation}\label{R0}
\begin{split}
&\left<\Rm(w_1,v)v,w_2\right>=\left<\Rm(w_1,v_\hor)v_\hor,w_2\right>\\
&+\left<\Rb,v\right>\left<\Rm(w_1,v)\Rb,w_2\right> +\left<\Rb,v\right>\left<\Rm(w_1,\Rb)v_\hor,w_2\right>\\
&=\left<\Rm(w_1,v_\hor)v_\hor,w_2\right>+\frac{1}{4}\left<\Rb,v\right>^2\left<w_1,w_2\right>\\
&=\left<\Rm(w_1,v_\hor)v_\hor,w_2\right>+\frac{|v_\ver|^2}{4}\left<w_1,w_2\right>.
\end{split}
\end{equation}

By the definition of Tanaka connection,
\[
\bar\nabla_XY
\]
is horizontal for any vector fields $X$ and $Y$. Therefore, by Proposition \ref{SasakianProp},
\begin{equation}\label{R1}
\begin{split}
&\left<\bar\nabla_{v_i}\bar\nabla_{v_\hor}v_\hor,v_j\right>=\left<\nabla_{v_i}\bar\nabla_{v_\hor}v_\hor,v_j\right>\\
&=\left<\nabla_{v_i}(\nabla_{v_\hor}v_\hor-\left<\nabla_{v_\hor}v_\hor,\Rb\right>\Rb),v_j\right>\\
&=\left<\nabla_{v_i}\nabla_{v_\hor}v_\hor,v_j\right>+\frac{1}{2}\left<\nabla_{v_\hor}v_\hor,\Rb\right>\left<Jv_i,v_j\right>\\
&=\left<\nabla_{v_i}\nabla_{v_\hor}v_\hor,v_j\right>.
\end{split}
\end{equation}
Here we extend $v$ to a vector field and still call it $v$.

Similarly, we also have
\begin{equation}\label{R2}
\begin{split}
&\left<\bar\nabla_{v_\hor}\bar\nabla_{v_i}v_\hor,v_j\right>=\left<\nabla_{v_\hor}\bar\nabla_{v_i}v_\hor,v_j\right>\\
&=\left<\nabla_{v_\hor}(\nabla_{v_i}v_\hor-\left<\nabla_{v_i}v_\hor,\Rb\right>\Rb),v_j\right>\\
&=\left<\nabla_{v_\hor}\nabla_{v_i}v_\hor,v_j\right>+\frac{1}{2}\left<\nabla_{v_i}v_\hor,\Rb\right>\left<Jv,v_j\right>\\
&=\left<\nabla_{v_\hor}\nabla_{v_i}v_\hor,v_j\right>-\frac{1}{4}\left<Jv,v_i\right>\left<Jv,v_j\right>.
\end{split}
\end{equation}

By the definition of Tanaka connection, it also follows that
\begin{equation}\label{R3}
\begin{split}
&\left<\bar\nabla_{[v_i,v_\hor]}v_\hor,v_j\right>\\
&=\left<\nabla_{[v_i,v_\hor]}v_\hor,v_j\right>+\frac{1}{2}\left<\Rb,[v_i,v_\hor]\right>\left<Jv,v_j\right>.
\end{split}
\end{equation}

Therefore, by combining (\ref{R1}), (\ref{R2}), and (\ref{R3}), we obtain
\begin{equation}\label{R4}
\begin{split}
&\left<\overline\Rm(v_i,v_\hor)v_\hor,v_j\right>=\left<\Rm(v_i,v_\hor)v_\hor,v_j\right>\\
&\quad +\frac{1}{4}\left<Jv,v_i\right>\left<Jv,v_j\right> - \frac{1}{2}\left<\Rb,[v_i,v_\hor]\right>\left<Jv,v_j\right>\\
&=\left<\Rm(v_i,v_\hor)v_\hor,v_j\right> +\frac{3}{4}\left<Jv,v_i\right>\left<Jv,v_j\right>.
\end{split}
\end{equation}
Since $\bar\nabla_XY$ is horizontal, we also have
\begin{equation}\label{R5}
\left<\overline\Rm(w,v_\hor)v_\hor,\Rb\right>=\left<\overline\Rm(\Rb,v_\hor)v_\hor,w\right>=0
\end{equation}
for any vector field $w$.

Therefore, by (\ref{R0}), (\ref{R4}), and (\ref{R5}), the second assertion follows.
\end{proof}

\begin{proof}[Proof of Theorem \ref{keythmSasakian}]
It follows immediately from Corollary \ref{keycor2} and Proposition \ref{Sasak} with $c=\frac{6K_3}{K_2}$, $K_2=\frac{n}{2}$, $K_3=\frac{1}{4}$, and $k=2n$.
\end{proof}

\begin{proof}[Proof of Corollary \ref{keycorSasakian1}]
Let $\gamma(\cdot)$ be a minimizer of the functional
\[
\gamma(\cdot)\mapsto \inf_I\int_{s_0}^{s_1}\frac{1}{2}|\dot\gamma(s)|^2+W(\gamma(s))ds,
\]
where $I$ ranges over all smooth curves $\gamma(\cdot)$ such that $\dot\gamma(s)\in D_{\gamma(s)}$ for all $t$ in $[s_0,s_1]$.

By Theorem \ref{keythmSasakian},
\[
\begin{split}
&\left(1+\frac{3}{n}\right)\frac{d}{dt}f_t(\gamma(t))=\left(1+\frac{3}{n}\right)\dot f_t(\gamma(t))+\left(1+\frac{3}{n}\right)\left<\nabla_\hor f_t(\gamma(t)),\dot \gamma(t)\right>\\
&\leq -\frac{1}{2}|\nabla_\hor f_t|^2_x+V(x)+\frac{K_6}{c_2}\coth(c_2t)+\frac{3K_4}{n}+\left(1+\frac{3}{n}\right)\left<\nabla_\hor f_t(\gamma(t)),\dot \gamma(t)\right>.
\end{split}
\]

By Young's inequality, we have
\[
\begin{split}
\left(1+\frac{3}{n}\right)\frac{d}{dt}f_t(\gamma(t))&\leq V(\gamma(t))+\frac{K_6}{c_2}\coth(c_2t)+\frac{3K_4}{n}+\frac{1}{2}\left(1+\frac{3}{n}\right)^2|\dot \gamma(t)|^2.
\end{split}
\]

By integrating the above inequality, we obtain
\[
\begin{split}
\left(1+\frac{3}{n}\right)\left(f_{s_1}(x_1)-f_{s_0}(x_0)\right)&\leq\frac{K_6}{c_2^2}\ln\left(\frac{\sinh(c_2s_1)}{\sinh(c_2s_0)}\right) +\left(1+\frac{3}{n}\right)^2c_{s_0,s_1}(x_0,x_1).
\end{split}
\]

This result follows from this.
\end{proof}

\smallskip

\section{Parallel adapted frames}

In this section, we define convenient adapted frames along a path called parallel adapted frames and use it to see how the linearization of a flow changes.

Let $M$ be a compact Riemannian manifold of dimension $n$ equipped with a distribution $D$ (i.e. a sub-bundle of the tangent bundle) of rank $k$. An orthonormal frame $v_1,...,v_k,u_1,...,u_{n-k}$ in the tangent space $T_xM$ at a point $x$ is an adapted frame if $v_1,...,v_k$ is contained in the space $D_x$.

\begin{lem}\label{parallel}
Let $\gamma:[0,T]\to M$ be a smooth path in $M$. Then there exists a 1-parameter family of adapted frames
\[
\{v_1(t),...,v_k(t),u_1(t),...,u_{n-k}(t)\}
\]
along $\gamma(t)$  such that
\begin{enumerate}
\item $\dot v_i(t)$ is in the orthogonal complement of $D_{\gamma(t)}$ for $1\leq i\leq k$,
\item $\dot u_j(t)$ is in $D_{\gamma(t)}$ for $1\leq j\leq n-k$.
\end{enumerate}
Here $\dot v$ denotes the covariant derivative of $v(\cdot)$ along $\gamma(\cdot)$.

Moreover, if $\{\bar u_1(t),...,\bar u_{n-k}(t),\bar v_1(t),...,\bar v_k(t)\}$ is another such frame, then there are orthogonal matrices $O^{(1)}$ and $O^{(2)}$ of sizes $k\times k$ and $(n-k)\times(n-k)$ (independent of time $t$), respectively, such that
\[
\begin{split}
&\bar v_i(t)=\sum_{l=1}^k O_{il}^{(1)}v_l(t),\quad 1\leq i\leq k,\\
&\bar u_j(t)=\sum_{s=1}^{N-k} O_{js}^{(2)}u_s(t),\quad 1\leq j\leq n-k.
\end{split}
\]
\end{lem}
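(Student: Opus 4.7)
The plan is to reduce existence to a linear matrix ODE on the orthogonal group whose generator is manifestly skew symmetric, and to handle the uniqueness clause by expanding a second parallel adapted frame in the first and differentiating covariantly.

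For existence, I would fix an adapted orthonormal frame at $\gamma(0)$ and a parallel orthonormal frame $\{e_1(t),\ldots,e_n(t)\}$ along $\gamma$. In these parallel coordinates $T_{\gamma(t)}M\cong\Real^n$, the subspace $D_{\gamma(t)}$ becomes a smoothly varying $k$-plane $D(t)\subset\Real^n$ with orthogonal projection $P(t)$ and complement $Q(t)=I-P(t)$. Differentiating $P^2=P$ gives $P\dot P+\dot P P=\dot P$, from which one reads off that $\dot P$ interchanges $D(t)$ with $D(t)^\perp$. Writing the sought frame as the columns of a matrix $O(t)=[\,V(t)\mid U(t)\,]$ in these coordinates, the requirements that $V$ lie in $D$ and $\dot V$ lie in $D^\perp$ combine with the identity obtained by differentiating $PV=V$ to force $\dot V=\dot P V$, and analogously $\dot U=-\dot P U$. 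These amount to the single linear ODE
\[
\dot O=A(t)O,\qquad A(t)=[\dot P,P]=\dot P P-P\dot P,
\]
with $A(t)$ skew symmetric because $P$ and $\dot P$ are symmetric. Standard ODE theory produces a unique solution $O(t)$ on $[0,T]$ which, by skew symmetry of $A$, stays orthogonal. To see that the frame remains adapted, I would compute $\frac{d}{dt}(QV)=-\dot P(QV)$ using the identity $P\dot P=\dot P Q$, and invoke uniqueness for this linear ODE with initial data $Q(0)V(0)=0$ to conclude $QV\equiv 0$; the dual computation handles $PU\equiv 0$.

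For the uniqueness clause, I would expand a second parallel adapted frame in the first: since $\bar v_i$ and each $v_l$ are horizontal while each $u_s$ is vertical, the expansion must be $\bar v_i(t)=\sum_l O^{(1)}_{il}(t)v_l(t)$, and similarly $\bar u_j(t)=\sum_s O^{(2)}_{js}(t)u_s(t)$, with $O^{(1)}(t)$ and $O^{(2)}(t)$ orthogonal at each $t$ by orthonormality of both frames. Taking $\nabla_{\dot\gamma}$ of the first relation, the term $\nabla_{\dot\gamma}\bar v_i$ is vertical by hypothesis and $\sum_l O^{(1)}_{il}\nabla_{\dot\gamma}v_l$ is vertical for the same reason, so the remaining piece $\sum_l\dot O^{(1)}_{il}(t)v_l(t)$ is at once horizontal and vertical, hence zero; linear independence of $\{v_l\}$ forces $\dot O^{(1)}\equiv 0$, and the analogous argument gives $\dot O^{(2)}\equiv 0$. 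The main obstacle lies in the first step: recognizing that the two seemingly independent constraints (staying adapted and having the prescribed covariant derivative direction) collapse into a single well-posed linear ODE whose generator $[\dot P,P]$ is automatically skew symmetric. Once this is seen, everything else reduces to short algebraic manipulations based on $P^2=P$ plus one appeal to uniqueness of solutions of linear ODEs.
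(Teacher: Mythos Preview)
Your argument is correct, but it is organized differently from the paper's. The paper begins with an arbitrary smooth \emph{adapted} frame $\{\bar v_i(t),\bar u_j(t)\}$ along $\gamma$ and then rotates \emph{inside} each block: writing $v_i(t)=\sum_l O_{il}(t)\bar v_l(t)$, the condition $\langle\dot v_i,v_j\rangle=0$ becomes the linear ODE $\dot O=-O A$ with $A_{ij}=\langle\dot{\bar v}_i,\bar v_j\rangle$ skew, and similarly for the $u$-block. Because the rotation never leaves $D$ (resp.\ $D^\perp$), adaptedness is preserved for free, and uniqueness up to a constant orthogonal matrix follows directly from uniqueness for $\dot O=-O A$.

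Your route instead trivializes $T_{\gamma(t)}M$ by a parallel frame and encodes $D$ through the projection $P(t)$, arriving at the single ODE $\dot O=[\dot P,P]\,O$ on $O(n)$; you then need an extra step (the computation $\frac{d}{dt}(QV)=-\dot P(QV)$ and ODE uniqueness) to recover adaptedness. The trade-off: the paper's block-by-block argument is shorter and never has to re-establish adaptedness, while your projection formulation is more coordinate-free and makes the skew-symmetry of the generator transparent via $[\dot P,P]^T=-[\dot P,P]$; it also generalizes cleanly to flags, which the paper only alludes to in its remark. Your uniqueness argument (differentiate $\bar v_i=\sum_l O^{(1)}_{il}v_l$ and observe the horizontal residue must vanish) is essentially the same content as the paper's, stated more explicitly.
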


Let $u_1,...,u_{n-k}, v_1,...,v_k$ be an adapted frame in $T_{\gamma(0)}M$. According to the above lemma, there is a unique 1-parameter family of adapted frames $u_1(t),...,u_{n-k}(t), v_1(t),...,v_k(t)$ defined along $\gamma$ such that $v_i(0)=v_1$ and $u_j(0)=u_j$ for each $i=1,...,k$ and each $j=1,...,n-k$. We call $u_1(t),...,u_{n-k}(t), v_1(t),...,v_k(t)$ a parallel adapted frame defined along $\gamma$ which starts from $u_1,...,u_{n-k}, v_1,...,v_k$.

\begin{rem}
The notion of parallel adapted frame is a generalization of parallel transported frame in Riemannian geometry to the present setting (see the end of this section for detail). The construction of the above frame can be generalized to a more complicated setting where we have a smoothly varying flags of subspaces instead of just one distribution.
\end{rem}

Let $\varphi_t$ be the flow of a vector field $X_t$ defined by $\dot\varphi_t=X_t(\varphi_t)$ and $\varphi_0(x)=x$. The linearization $d\varphi_t$ of $\varphi_t$ satisfies
\[
\frac{d}{dt} d\varphi_t(w)=\nabla_{d\varphi_t(w)} X_t.
\]
Here, we use $\frac{d}{dt}$ to denote covariant derivative along $t\mapsto \varphi_t(x)$.

Therefore, the change in $d\varphi_t$ is completely determined by the $(1,1)$-tensor $w\mapsto \nabla_w X_t$. We will investigate the equation satisfied by this tensor. Let $N(t)$ be the matrix representation of the $(1,1)$-tensor $w\mapsto \nabla_w X_t$ with respect to the above parallel adapted frame at time $t$. More precisely, the $ij$-th entry $N_{ij}(t)$ of $N(t)$ is defined by
\[
N_{ij}(t)=\begin{cases}
  \left<\nabla_{u_i(t)} X_t,u_j(t)\right> & \mbox{ if $i\leq n-k$ and $j\leq n-k$}\\
  \left<\nabla_{u_i(t)} X_t,v_{j-n+k}(t)\right> & \mbox{ if $i\leq n-k$ and $j> n-k$}\\
  \left<\nabla_{v_{i-n+k}(t)} X_t,u_j(t)\right> & \mbox{ if $i> n-k$ and $j\leq n-k$}\\
 \left<\nabla_{v_{i-n+k}(t)} X_t,v_{j-n+k}(t)\right> & \mbox{ if $i> n-k$ and $j> n-k$}.
\end{cases}
\]

Similarly, let $R(t)$ and $M(t)$ be the matrix representations of the bilinear form
\[
w\mapsto\left<\Rm(w,X_t)X_t,w\right>
\]
and the $(1,1)$-tensor
\[
w\mapsto \nabla_{w}(\dot X_t+\nabla_{X_t}X_t),
\]
respectively, with respect to the given parallel adapted frame at time $t$. Finally, let
\[
\mathfrak W(t)=\left(
            \begin{array}{cc}
              0_{(n-k)\times(n-k)} & W(t) \\
              -W(t)^T & 0_{k\times k}
            \end{array}
          \right),
\]
where $W(t)$ is the $(n-k)\times k$ matrix with $ij$-th entry equal to
\[
W_{ij}(t)=\left<\dot u_i(t),v_j(t)\right>.
\]

\begin{lem}\label{N}
The 1-parameter family of matrices $N(t)$ satisfies the following matrix Riccati equation
\[
\begin{split}
\dot N(t)=-N(t)^2-N(t)\mathfrak W(t)-\mathfrak W(t)^TN(t)-R(t)+M(t).
\end{split}
\]
\end{lem}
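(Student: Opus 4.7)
The plan is to prove the Riccati equation in two stages: first, establish a coordinate-free operator identity for the $(1,1)$-tensor $\tilde N : w \mapsto \nabla_w X_t$; then convert this to the matrix identity by accounting for the non-parallel evolution of the parallel adapted frame.

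For the first stage, I would use a standard Jacobi-field argument. Fix $x\in M$ and $w\in T_xM$, choose a curve $c(s)$ with $c(0)=x$ and $\dot c(0)=w$, and consider the variation $\Phi(s,t)=\varphi_t(c(s))$. The symmetry $\frac{D}{dt}\partial_s\Phi = \frac{D}{ds}\partial_t\Phi$ combined with $\partial_t\Phi = X_t\circ\Phi$ immediately gives $\frac{D}{dt} V = \tilde N(V)$ where $V(t):=\partial_s\Phi|_{s=0}=d\varphi_t(w)$. Differentiating a second time, the curvature commutator $[\frac{D}{dt},\frac{D}{ds}] = \Rm(\partial_t\Phi, \partial_s\Phi)$ together with $\frac{D}{dt}\partial_t\Phi = \dot X_t + \nabla_{X_t}X_t$ yields
\[
\frac{D^2}{dt^2} V = \nabla_V(\dot X_t + \nabla_{X_t}X_t) + \Rm(X_t,V)X_t = \tilde M(V) - \tilde R(V),
\]
where $\tilde R(w) := \Rm(w,X_t)X_t$ is represented by $R$ in the parallel adapted frame. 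Since $\varphi_t$ is a diffeomorphism, $V(t)$ ranges over all of $T_{\varphi_t(x)}M$ as $w$ varies, so matching with $\frac{D^2}{dt^2} V = (\frac{D}{dt}\tilde N)(V) + \tilde N^2(V)$ gives the coordinate-free identity $\frac{D}{dt}\tilde N = -\tilde N^2 + \tilde M - \tilde R$.

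For the second stage, I would write $N_{ab}(t) = \langle \tilde N(e_a(t)), e_b(t)\rangle$ in the parallel adapted frame $\{e_a\}=\{u_1,\ldots,u_{n-k},v_1,\ldots,v_k\}$ and define $\mathfrak W_{ab}(t) := \langle \dot e_a, e_b\rangle$. The parallel-adapted conditions ($\dot u_j$ horizontal, $\dot v_i$ vertical) make the diagonal blocks of $\mathfrak W$ vanish and identify the off-diagonal entries with the matrix $W$ from the statement; differentiating $\langle u_i, v_j\rangle\equiv 0$ gives antisymmetry $\mathfrak W^T=-\mathfrak W$. Applying the product rule and expanding $\dot e_a = \sum_c \mathfrak W_{ac}\, e_c$, one obtains
\[
\dot N_{ab} = \langle \tfrac{D}{dt}\tilde N(e_a), e_b\rangle + (\mathfrak W N)_{ab} + (N\mathfrak W^T)_{ab}.
\]
Substituting the first-stage identity and using $\mathfrak W^T=-\mathfrak W$ to rewrite $\mathfrak W N + N\mathfrak W^T = -\mathfrak W^T N - N\mathfrak W$ produces the stated Riccati equation.

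The computation is mechanical once the variational identity of Step 1 is in place, so the main obstacle is really bookkeeping: one must distinguish the Levi-Civita covariant derivative of the tensor $\tilde N$ along $\gamma(t) = \varphi_t(x)$ from the ordinary time derivative of its matrix entries in a frame that is not Levi-Civita parallel. The antisymmetric correction $\mathfrak W$ encodes exactly this discrepancy, and is the only place where the particular choice of parallel adapted frame, as opposed to a genuinely parallel-transported frame, enters the identity.
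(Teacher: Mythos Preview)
Your proposal is correct and rests on the same Jacobi-field computation as the paper, but the organization differs in a way worth noting. The paper introduces the transition matrix $A(t)$ defined by $\Phi_t=A(t)V(t)$, where $\Phi_t$ collects the pushforwards $d\varphi_t(e_a(0))$ and $V(t)$ the adapted frame; from the first and second covariant derivatives of $\Phi_t$ it extracts the relations $N=A^{-1}\dot A+\mathfrak W$ and a second-order linear equation for $A$, and then eliminates $A$ to obtain the Riccati equation. You instead first isolate the coordinate-free operator identity $\tfrac{D}{dt}\tilde N=-\tilde N^2+\tilde M-\tilde R$ and only afterwards account for the frame via $\dot N=(\tfrac{D}{dt}\tilde N)+\mathfrak W N+N\mathfrak W^T$. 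Your route avoids the auxiliary matrix $A$ and makes transparent that $\mathfrak W$ enters solely as the discrepancy between Levi-Civita parallel transport and the adapted frame; the paper's route is the classical ``Riccati from a linear system'' derivation. The two are equivalent, and your bookkeeping (the block structure and antisymmetry of $\mathfrak W$, the sign in $\Rm(X_t,V)X_t=-\tilde R(V)$) checks out.
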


Finally, we split each of $N(t)$, $R(t)$, and $M(t)$ into four pieces
\[
\begin{split}
&N(t)=\left(
       \begin{array}{cc}
         \mathcal N_{00}(t) & \mathcal N_{01}(t) \\
         \mathcal N_{10}(t) & \mathcal N_{11}(t) \\
       \end{array}
     \right), \quad R(t)=\left(
       \begin{array}{cc}
         \mathcal R_{00}(t) & \mathcal R_{01}(t) \\
         \mathcal R_{10}(t) & \mathcal R_{11}(t) \\
       \end{array}
     \right),\\
      &\quad M(t)=\left(
       \begin{array}{cc}
         \mathcal M_{00}(t) & \mathcal M_{01}(t) \\
         \mathcal M_{10}(t) & \mathcal M_{11}(t) \\
       \end{array}
     \right),
\end{split}
\]
where $\mathcal N_{00}(t)$, $\mathcal R_{00}(t)$, and $\mathcal M_{00}(t)$ are of size $(n-k)\times (n-k)$.

The following, which will be used in the later sections, is an immediate consequence of Lemma \ref{N}.

\begin{lem}\label{Ncomp}
The 1-parameter family of matrices $N(t)$ satisfies the following
\[
\begin{split}
\dot{\mathcal N}_{11}(t)&=-\mathcal N_{11}(t)^2-\mathcal N_{10}(t)\mathcal N_{01}(t)\\
&\quad -\mathcal N_{10}(t)W(t)-W(t)^T\mathcal N_{01}(t)-\mathcal R_{11}(t)+\mathcal M_{11}(t).
\end{split}
\]
\end{lem}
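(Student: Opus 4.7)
The plan is to read off the bottom-right $k\times k$ block (the $\mathcal N_{11}$-block) from the matrix Riccati identity of Lemma \ref{N}. Since block-partitioning commutes with differentiation in $t$, the $(1,1)$-block of $\dot N(t)$ is exactly $\dot{\mathcal N}_{11}(t)$, so it suffices to compute the $(1,1)$-block of each term on the right-hand side of
\[
\dot N(t)=-N(t)^2-N(t)\mathfrak W(t)-\mathfrak W(t)^T N(t)-R(t)+M(t).
\]

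First, I would apply block matrix multiplication to $N(t)^2$ using the given decomposition of $N(t)$; the bottom-right block comes out to $\mathcal N_{10}(t)\mathcal N_{01}(t)+\mathcal N_{11}(t)^2$. Second, using that $\mathfrak W(t)$ is block off-diagonal with upper-right block $W(t)$ and lower-left block $-W(t)^T$, a direct multiplication shows that the bottom-right block of $N(t)\mathfrak W(t)$ equals $\mathcal N_{10}(t)W(t)$, while that of $\mathfrak W(t)^T N(t)$ equals $W(t)^T\mathcal N_{01}(t)$. Third, the bottom-right blocks of $R(t)$ and $M(t)$ are, by the definition of the block splittings, just $\mathcal R_{11}(t)$ and $\mathcal M_{11}(t)$.

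Assembling these four contributions with the signs prescribed by Lemma \ref{N} gives exactly the claimed formula. There is no real obstacle, since the statement is a purely algebraic block-matrix identity; the only thing to be careful about is correctly tracking the two cross terms $\mathcal N_{10}(t)W(t)$ and $W(t)^T\mathcal N_{01}(t)$, which arise from the off-diagonal structure of $\mathfrak W(t)$ and are not visible in the corresponding identity for $\dot N(t)$ itself.
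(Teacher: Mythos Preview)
Your proposal is correct and matches the paper's approach: the paper states Lemma \ref{Ncomp} as ``an immediate consequence of Lemma \ref{N}'' without further argument, and your block-matrix computation is precisely the immediate verification the paper has in mind.
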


\begin{proof}[Proof of Lemma \ref{parallel}]
Let $(\bar v_1(t),...,\bar v_k(t),\bar u_1(t),...,\bar u_{n-k}(t))$ be a 1-parameter family of adapted frames along the path $\gamma(t)$. Let
\[
(v_1(t),...,v_k(t), u_1(t),...,u_{n-k}(t))
\]
be any other such family. Then
\[
\begin{split}
&v_i(t)=\sum_{l=1}^kO_{il}(t)\bar v_l(t),\quad 1\leq i\leq k,\\
\end{split}
\]
where $O(t)$ are orthogonal matrices of size $k\times k$.

Let $A(t)$ be the $k\times k$ matrix with $ij$-th entry equal to $\left<\dot{\bar v}_i(t),\bar v_j(t)\right>$. Then $\left<\dot v_i(t),v_j(t)\right>=0$ for each $1\leq i, j\leq k$ if and only if
\[
\dot O(t)=-O(t)A(t).
\]
Since $A(t)$ is skew symmetric, we have a solution $O(t)$ to the above ODE. Moreover, any solution is determined by its initial condition. This proves the result for $v_i(\cdot)$. A similar procedure gives the result for $u_i(\cdot)$.
\end{proof}

\begin{proof}[Proof of Lemma \ref{N}]
Let
\[
V(t)=(u_1(t),...,u_{n-k}(t),v_1(t),...,v_k(t))^T
\]
and
\[
\dot V(t)=(\dot u_1(t),...,\dot u_{n-k}(t),\dot v_1(t),...,\dot v_k(t))^T,
\]
where $\{u_1(t),...,u_{n-k}(t),v_1(t),...,v_k(t)\}$ is a parallel adapted frame defined along the curve $t\mapsto\varphi_t(x)$ as in Lemma \ref{parallel}.

By Lemma \ref{parallel}, we have
\[
\dot V(t)=\left(
            \begin{array}{cc}
              0_{(N-k)\times (N-k)} & W(t) \\
              -W(t)^T & 0_{k\times k}
            \end{array}
          \right)V(t)=\mathfrak W(t)V(t).
\]

If we differentiate the above equation once more, then we obtain
\[
\ddot V(t)=\dot {\mathfrak W}(t)V(t)+\mathfrak W(t)\dot V(t)=(\dot {\mathfrak W}(t)+\mathfrak W(t)^2) V(t).
\]

Let $\Phi_t=(d\varphi_t(v_0(0)),...,d\varphi_t(v_{2n}(0)))^T$ and let $A(t)$ be the matrices defined by
\[
\Phi_t=A(t)V(t).
\]
It follows that
\begin{equation}\label{E1}
\Lev t\Phi_t=(\dot A(t)+A(t)\mathfrak W(t))V(t)
\end{equation}
and
\begin{equation}\label{E2}
\begin{split}
\Lev t\Lev t\Phi_t&=\Big(\ddot A(t)+2\dot A(t)\mathfrak W(t)+A(t)\dot{\mathfrak W}(t)+A(t)\mathfrak W(t)^2\Big)V(t).
\end{split}
\end{equation}

On the other hand, if we let $\gamma(s)$ be a path such that $\gamma'(0)=v_i(0)$, then
\begin{equation}\label{E3}
\Lev td\varphi_t(v_i(0))=\Lev sX_t(\varphi_t(\gamma(s)))\Big|_{s=0}=\sum_{j=0}^{2n}A_{ij}(t)\nabla_{v_j(t)}X_t(\varphi_t(x)).
\end{equation}
By the definition of $\Rm$, we also have
\begin{equation}\label{E4}
\begin{split}
&\Lev t\Lev td\varphi_t(v_i(0))+\sum_{j=0}^{2n}A_{ij}(t)\Rm(v_j(t),X_t(\varphi_t(x)))X_t(\varphi_t(x))\\
&=\Lev s\Lev tX_t(\varphi_t(\gamma(s)))\Big|_{s=0}\\
&=\sum_{j=0}^{2n}A_{ij}(t)\left(\nabla_{v_j(t)}(\dot X_t+\nabla_{X_t}X_t)\right)(\varphi_t(x)).
\end{split}
\end{equation}

By (\ref{E1}) and (\ref{E3}), we have
\begin{equation}\label{E5}
N(t)=A(t)^{-1}\dot A(t)+\mathfrak W(t).
\end{equation}

By (\ref{E2}) and (\ref{E4}), we also have
\begin{equation}\label{E6}
\begin{split}
&A(t)^{-1}\ddot A(t)+2A(t)^{-1}\dot A(t)\mathfrak W(t)\\
&\quad +\dot{\mathfrak W}(t) +\mathfrak W(t)^2+R(t)-M(t)=0.
\end{split}
\end{equation}

If we combine (\ref{E5}) and (\ref{E6}), then we obtain
\[
\begin{split}
\dot N(t)=-N(t)^2-N(t)\mathfrak W(t)-\mathfrak W(t)^TN(t)-R(t)+M(t)
\end{split}
\]
as claimed.
\end{proof}

Before ending this section, let us discuss the relationships between parallel transported frames and the Tanaka connection. In the usual Riemannian case, if $v_1(t),...,v_n(t)$ is a parallel orthonormal frame defined along a path $\gamma$, then one can define the covariant derivative $\dot v(t)$ of a vector field
\[
v(t)=a_1(t)v_1(t)+...+a_n(t)v_n(t)
\]
defined along $\gamma(t)$ by
\[
\dot v(t)=\dot a_1(t)v_1(t)+...+\dot a_n(t)v_n(t).
\]
It is, of course, well-known that the covariant derivative is closely related to the corresponding Levi-Civita connection.

Similarly, one can define certain covariant derivative corresponding to the above parallel transported frames. More precisely, let
\[
\Rb(\gamma(t)), v_1(t),...,v_{2n}(t)
\]
be a parallel transported frame defined along a path $\gamma$ in an almost contact metric manifold. If
\[
v(t)=a_0(t)\Rb(\gamma(t))+a_1(t)v_1(t)+...+a_{2n}(t)v_{2n}(t)
\]
is a vector field defined along $\gamma(t)$, then the covariant derivative $\frac{\bar D}{dt}$ corresponding to the parallel transported frames of $v(t)$ along $\gamma$ is defined by
\[
\frac{\bar D}{dt} v(t)=\dot a_0(t)\Rb(\gamma(t))+\dot a_1(t)v_1(t)+...+\dot a_{2n}(t)v_{2n}(t).
\]
Note that the definition of $\frac{\bar D}{dt}$ is well-defined.

The following lemma gives some basic properties of $\frac{\bar D}{dt}$ and some of its relationships with the Tanaka connection $\bar\nabla$. Since it is not needed for the rest of the paper, the proof is omitted.

\begin{lem}\label{twcovar}
Let $w(t)$, $w_1(t)$, and $w_2(t)$ be three vector fields defined along a curve $\gamma$ in an almost contact metric manifold. Let $c_1$ and $c_2$ be two constants and let $a(t)$ be a smooth function. Then
\begin{enumerate}
\item $\Ta \left(c_1w_1(t)+c_2w_2(t)\right)=c_1\Ta w_1(t)+c_2\Ta w_2(t)$,
\item $\Ta a(t)w(t)=\dot a(t)w(t)+a(t)\Ta w(t)$,
\item $\Ta\Rb=0$,
\item if both $w_1(t)$ and $w_2(t)$ are contained in $D$, then
\[
\frac{d}{dt}\left<w_1(t),w_2(t)\right>=\left<\Ta w_1(t),w_2(t)\right>+\left<w_1(t),\Ta w_2(t)\right>,
\]
\item if $Y$ is a vector field contained in $D$, then
\[
\begin{split}
\Ta Y(\gamma(t))&=\bar \nabla_{\dot\gamma(t)}Y-\frac{1}{2}\left<\Rb,\dot\gamma(t)\right>JY.
\end{split}
\]
\end{enumerate}
\end{lem}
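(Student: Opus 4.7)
My plan is to verify each of the five assertions directly from the definition of $\Ta$, by expanding every vector field along $\gamma$ in the parallel frame $\Rb(\gamma(t)), v_1(t),\dots, v_{2n}(t)$ and tracking its coefficients. Parts (1) and (2) are then immediate: $\Ta$ is coefficient-wise differentiation against a fixed frame, so it is linear in $w$ and satisfies the Leibniz rule with respect to scalar multiplication by a smooth function of $t$. Part (3) is also immediate because, in the chosen expansion, $\Rb(\gamma(t))$ has constant coefficients $(1,0,\dots,0)$ along $\gamma$, so its covariant derivative in this sense vanishes.

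For (4), write $w_j(t) = \sum_{i=1}^{2n} a_i^{(j)}(t)\, v_i(t)$ for $j=1,2$; since $\{v_1(t),\dots,v_{2n}(t)\}$ is orthonormal and $w_1(t), w_2(t) \in D_{\gamma(t)}$, the inner product reduces to $\langle w_1(t), w_2(t)\rangle = \sum_i a_i^{(1)}(t) a_i^{(2)}(t)$. Differentiating this finite sum in $t$ and comparing with the coefficient expressions for $\Ta w_1$ and $\Ta w_2$ gives the claim directly.

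The substantive content is in (5). Expanding $Y(\gamma(t)) = \sum_{i=1}^{2n} a_i(t) v_i(t)$, the definition gives
\[
\Ta Y = \sum_i \dot a_i(t)\, v_i(t),
\]
which is a horizontal vector. The Levi--Civita covariant derivative along $\gamma$ reads
\[
\nabla_{\dot\gamma(t)}Y = \sum_i \dot a_i(t)\, v_i(t) + \sum_i a_i(t)\, \dot v_i(t),
\]
and by the defining property of a parallel adapted frame (Lemma \ref{parallel}) each $\dot v_i(t)$ lies in $D^\perp_{\gamma(t)} = \spn\{\Rb(\gamma(t))\}$. Projecting onto $D_{\gamma(t)}$ yields $\Ta Y = (\nabla_{\dot\gamma}Y)_\hor$. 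Next, using the defining formula for the Tanaka connection together with $\langle \Rb, Y\rangle = 0$, we obtain
\[
\bar\nabla_{\dot\gamma}Y = \nabla_{\dot\gamma}Y + \tfrac{1}{2}\langle \Rb,\dot\gamma\rangle JY - \tfrac{1}{2}\langle J\dot\gamma, Y\rangle \Rb.
\]
A short calculation using $\nabla_X\Rb = -\tfrac{1}{2}JX$ from Proposition \ref{SasakianProp} gives $\langle \nabla_{\dot\gamma}Y,\Rb\rangle = -\langle Y, \nabla_{\dot\gamma}\Rb\rangle = \tfrac{1}{2}\langle J\dot\gamma, Y\rangle$, so the vertical parts cancel and $\bar\nabla_{\dot\gamma}Y$ is horizontal. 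Rearranging then produces $\Ta Y = \bar\nabla_{\dot\gamma} Y - \tfrac{1}{2}\langle \Rb, \dot\gamma\rangle JY$, which is (5).

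The main obstacle I anticipate is precisely the identity $\langle \nabla_{\dot\gamma}Y,\Rb\rangle = \tfrac{1}{2}\langle J\dot\gamma, Y\rangle$: it is the single computation that pins down why the Tanaka correction $-\tfrac{1}{2}\langle J\dot\gamma, Y\rangle \Rb$ exactly cancels the vertical part of the Levi--Civita derivative, and hence why $\bar\nabla_{\dot\gamma}Y$ is horizontal with a clean relation to $\Ta Y$. Once this identity is verified, the remaining work is bookkeeping with coefficients in the parallel adapted frame.
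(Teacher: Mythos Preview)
The paper explicitly omits the proof of this lemma (``Since it is not needed for the rest of the paper, the proof is omitted''), so there is no argument in the text to compare against. Your proofs of (1)--(4) are correct and are the natural ones: each follows immediately by reading off coefficients in the parallel adapted frame $\Rb(\gamma(t)),v_1(t),\dots,v_{2n}(t)$.

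Your proof of (5) is also correct, with one caveat worth flagging. The step where you compute $\langle\nabla_{\dot\gamma}Y,\Rb\rangle=\tfrac12\langle J\dot\gamma,Y\rangle$ invokes $\nabla_X\Rb=-\tfrac12 JX$ from Proposition~\ref{SasakianProp}, which is a \emph{Sasakian} identity, whereas the lemma is stated for a general almost contact metric manifold. In that generality one does not have a fixed relation between $\nabla\Rb$ and $J$, so the cancellation of the vertical parts that you rely on need not occur. Either the lemma tacitly intends the Sasakian (or at least K-contact) setting, or part~(5) requires an additional hypothesis beyond ``almost contact metric.'' Since the paper only ever applies this material in the Sasakian case, your argument establishes exactly the situation of interest; just be aware that it does not prove (5) in the full generality of the lemma's stated hypotheses.
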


\smallskip

\section{Distributions with Transversal Symmetries}

In this section, we assume that the orthogonal complement of the given distribution $D$ is spanned by $n-k$ horizontal isometries denoted by $\Rb_1,...,\Rb_{n-k}$ We will also assume that the vector field $X_t$ in the previous section is the horizontal gradient $\nabla_\hor f_t$ of a one-parameter family of functions $f_t$ defined on the manifold $M$ and specialize Lemma \ref{Ncomp} to this case.

\begin{lem}\label{keylem}
Assume that the orthogonal complement $D^\perp$ of the distribution $D$ is involutive and is given by the span of $n-k$ horizontal isometries. If $\varphi_t$ is the flow of a time-dependent vector field $\nabla_\hor f_t$. Then
\[
\begin{split}
\frac{d}{dt}\Delta_\hor f_t(\varphi_t)&\leq -\frac{1}{k}(\Delta_\hor f_t(\varphi_t(x)))^2+ \Delta_\hor\left(\dot f_t+\frac{1}{2}|\nabla_\hor f|^2\right)\\
&-\ric^\hor_{\varphi_t(x)}(\nabla f_t,\nabla f_t)+\ric^\ver_{\varphi_t(x)}(\nabla_\hor f_t, \nabla_\hor f_t)\\
&-4\sum_k\left<\nabla_{\nabla_\hor f_t}\Rb_k,\nabla_{\Rb_k}\nabla_\hor f_t\right>_{\varphi_t(x)} .
\end{split}
\]
\end{lem}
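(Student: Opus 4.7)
The strategy is to apply Lemma~\ref{Ncomp} to the flow $\varphi_t$ of $X_t=\nabla_\hor f_t$, take the trace of the resulting matrix identity, and identify each term as a differential-geometric quantity on $M$. The horizontal-isometry hypothesis together with the involutivity of $D^\perp$ plays the role that the symmetry of the Hessian plays in the usual Riemannian Bochner argument along a gradient flow.

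First I would fix a parallel adapted frame $\{v_1(t),\dots,v_k(t),u_1(t),\dots,u_{n-k}(t)\}$ along $\gamma(t)=\varphi_t(x)$ and verify the identification $\operatorname{tr}\mathcal N_{11}(t)=\Delta_\hor f_t(\gamma(t))$. Expanding $\nabla_{v_i}\nabla_\hor f_t=\nabla_{v_i}\nabla f_t-\nabla_{v_i}\nabla_\ver f_t$, the vertical piece contributes $\sum_k(\Rb_k f_t)\langle\nabla_{v_i}\Rb_k,v_i\rangle$, which vanishes by the skew-symmetry clause of a horizontal isometry applied with $X_1=X_2=v_i$. Hence $\tfrac{d}{dt}\Delta_\hor f_t(\gamma(t))=\tfrac{d}{dt}\operatorname{tr}\mathcal N_{11}(t)$ and Lemma~\ref{Ncomp} applies.

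Next I would interpret the pieces in the trace. Decompose $\mathcal N_{11}=S+A$ into symmetric and antisymmetric parts: the skew clause of the horizontal-isometry definition identifies $S_{ij}=\langle\nabla_{v_i}\nabla f_t,v_j\rangle$ (the ordinary Hessian of $f_t$ restricted to $D$, symmetric by torsion-freeness) and $A_{ij}=-\langle\nabla_{v_i}\nabla_\ver f_t,v_j\rangle$. Since $\operatorname{tr}(SA)=0$, one has $\operatorname{tr}(\mathcal N_{11}^2)=|S|^2-|A|^2$, and Cauchy--Schwarz gives $|S|^2\geq\tfrac1k(\operatorname{tr}S)^2=\tfrac1k(\Delta_\hor f_t)^2$. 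The curvature block yields $\operatorname{tr}\mathcal R_{11}=\ric^\hor(\nabla_\hor f_t,\nabla_\hor f_t)$, and for $\mathcal M_{11}$ one checks that $\dot X_t+\nabla_{X_t}X_t$ equals the horizontal gradient of $\dot f_t+\tfrac12|\nabla_\hor f_t|^2$ up to vertical corrections that drop out of the horizontal trace by the same horizontal-isometry identity used above, giving $\operatorname{tr}\mathcal M_{11}=\Delta_\hor(\dot f_t+\tfrac12|\nabla_\hor f_t|^2)$. The remaining work is to rewrite the combined contribution of $-\operatorname{tr}(\mathcal N_{10}\mathcal N_{01}+\mathcal N_{10}W+W^T\mathcal N_{01})$ together with the leftover $+|A|^2$ in terms of the fixed horizontal isometries $\Rb_k$ (an orthogonal change of basis in $D^\perp$, which leaves traces invariant); this should produce exactly the extra pieces needed to promote $\ric^\hor(\nabla_\hor f_t,\nabla_\hor f_t)$ to $\ric^\hor(\nabla f_t,\nabla f_t)-\ric^\ver(\nabla_\hor f_t,\nabla_\hor f_t)$, plus the clean coupling $-4\sum_k\langle\nabla_{\nabla_\hor f_t}\Rb_k,\nabla_{\Rb_k}\nabla_\hor f_t\rangle$.

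The main obstacle is this last rewriting. It requires systematic use of both clauses of the horizontal-isometry definition, $[\Rb_k,X]\in D$ and $\langle\nabla_X\Rb_k,Y\rangle+\langle X,\nabla_Y\Rb_k\rangle=0$; the involutivity of $D^\perp$ (so that $W$-rotations genuinely lie in $D$ as covariant derivatives are commuted); and the Ricci identity $\nabla_{v_i}\nabla_{\Rb_k}X_t-\nabla_{\Rb_k}\nabla_{v_i}X_t-\nabla_{[v_i,\Rb_k]}X_t=\Rm(v_i,\Rb_k)X_t$, applied repeatedly to commute covariant derivatives and trade the resulting $\Rm$-terms for Ricci components. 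The bookkeeping is purely algebraic once the parallel adapted frame is fixed, but it is where all the structural assumptions on $D$ and the $\Rb_k$'s enter simultaneously; once it is done, assembling the pieces yields the inequality as stated.
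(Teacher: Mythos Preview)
Your overall architecture matches the paper's: trace Lemma~\ref{Ncomp} with $X_t=\nabla_\hor f_t$, identify $\operatorname{tr}\mathcal N_{11}=\Delta_\hor f_t$, split $\mathcal N_{11}=S+A$, and apply Cauchy--Schwarz to $|S|^2$. The paper also uses the key simplification $W(t)^T=\mathcal N_{10}(t)$ (coming directly from the skew clause of the horizontal-isometry definition), which collapses $-\operatorname{tr}(\mathcal N_{10}\mathcal N_{01}+\mathcal N_{10}W+W^T\mathcal N_{01})$ to $-2\operatorname{tr}(\mathcal N_{10}\mathcal N_{01})-|\mathcal N_{10}|^2$; you should make this explicit.

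The genuine gap is in your treatment of $\operatorname{tr}\mathcal M_{11}$. You assert that $\dot X_t+\nabla_{X_t}X_t$ equals $\nabla_\hor(\dot f_t+\tfrac12|\nabla_\hor f_t|^2)$ ``up to vertical corrections that drop out of the horizontal trace.'' This is false: since $X_t$ is horizontal, $\dot X_t+\nabla_{X_t}X_t$ is itself horizontal, and a direct computation (Lemma~\ref{facts}(5)) gives
\[
\dot X_t+\nabla_{X_t}X_t=\nabla_\hor\Big(\dot f_t+\tfrac12|\nabla_\hor f_t|^2\Big)-2\big(\nabla_{\nabla_\hor f_t}\nabla_\ver f_t\big)_\hor.
\]
The correction is \emph{horizontal}, not vertical, so the skew identity $\langle\nabla_{v_i}\Rb_k,v_i\rangle=0$ does not kill it. Its horizontal trace $-2\sum_i\langle\nabla_{v_i}\nabla_{\nabla_\hor f_t}\nabla_\ver f_t,v_i\rangle$ is computed in the paper via Lemma~\ref{facts}(8),(12) and yields precisely $-2\,\ric^\hor(\nabla f_t,\nabla_\ver f_t)$, an extra $+2\,\ric^\ver(\nabla_\hor f_t,\nabla_\hor f_t)$, and half of the $-4\sum_k\langle\nabla_{\nabla_\hor f_t}\Rb_k,\nabla_{\Rb_k}\nabla_\hor f_t\rangle$ coupling. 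Without these contributions your bookkeeping cannot close: the ``remaining terms'' you list, namely $-2\operatorname{tr}(\mathcal N_{10}\mathcal N_{01})-|\mathcal N_{10}|^2+|A|^2$, evaluate to $-2\sum_k\langle\nabla_{\nabla_\hor f_t}\Rb_k,\nabla_{\Rb_k}\nabla_\hor f_t\rangle-\ric^\ver(\nabla_\hor f_t,\nabla_\hor f_t)+\ric^\hor(\nabla_\ver f_t,\nabla_\ver f_t)$, which is short by exactly the three pieces above. In particular the sign on $\ric^\ver$ comes out wrong and the coupling coefficient is $-2$, not $-4$. The fix is not further commutation of covariant derivatives on the off-diagonal blocks; it is to compute $\operatorname{tr}\mathcal M_{11}$ correctly.
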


\begin{proof}
Let us use the notation as in Lemma \ref{N} with $X_t=\nabla_\hor f_t$. Recall that $u_1(t), ..., u_{n-k}(t), v_1(t),...,v_k(t)$ is a parallel adapted frame along the path $t\mapsto\varphi_t(x)$, where $\varphi_t$ is the flow of $\nabla_\hor f_t$. By (1) of Lemma \ref{facts}, we have
\begin{equation}\label{N11}
\begin{split}
\tr(\mathcal N_{11}(t))&=\sum_i\left<\nabla_{v_i(t)}\nabla_\hor f_t,v_i(t)\right>\\
& = \sum_i\left<\nabla_{v_i(t)}\nabla f_t,v_i(t)\right> = \Delta_\hor f_t(\varphi_t(x)).
\end{split}
\end{equation}

By (1) of Lemma \ref{facts} and the symmetry of the Hessian, we also have
\[
\begin{split}
\tr(\mathcal N_{11}(t)^2)&=\sum_{i,j}\left<\nabla_{v_i(t)}\nabla_\hor f,v_j(t)\right>\left<\nabla_{v_j(t)}\nabla_\hor f,v_i(t)\right>\\
&= \sum_{i,j}\left(\left<\nabla_{v_i(t)}\nabla f,v_j(t)\right>-\left<\nabla_{v_i(t)}\nabla_\ver f,v_j(t)\right>\right)\\
&\quad \cdot\left(\left<\nabla_{v_j(t)}\nabla f,v_i(t)\right>-\left<\nabla_{v_j(t)}\nabla_\ver f,v_i(t)\right>\right)\\
&= \sum_{i,j}\left(\left<\nabla_{v_i(t)}\nabla f,v_j(t)\right>+\left<\nabla_{v_j(t)}\nabla_\ver f,v_i(t)\right>\right)\\
&\quad\cdot\left(\left<\nabla_{v_i(t)}\nabla f,v_j(t)\right>-\left<\nabla_{v_j(t)}\nabla_\ver f,v_i(t)\right>\right)\\
&= \sum_{i,j}\left<\nabla_{v_i(t)}\nabla f,v_j(t)\right>^2-\sum_{i,j}\left<\nabla_{v_j(t)}\nabla_\ver f,v_i(t)\right>^2.
\end{split}
\]

Therefore, by (11) of Lemma \ref{facts} and the Cauchy-Schwartz inequality, we have
\begin{equation}\label{N11sq}
\begin{split}
\tr(\mathcal N_{11}(t)^2)&\geq \frac{1}{k}(\Delta_\hor f_t(\varphi_t(x))^2-\ric^\hor_{\varphi_t(x)}(\nabla_\ver f,\nabla_\ver f).
\end{split}
\end{equation}

Let $O(t)$ be a family of orthogonal matrices such that
\[
u_i(t)=\sum_j O_{ij}(t)\Rb_j(\varphi_t(x)).
\]
It follows from (1) of Lemma \ref{facts} that
\[
\begin{split}
W_{ij}(t)&=\left<\dot u_i(t), v_j(t)\right>=\sum_{k}O_{ik}(t)\left<\nabla_{\nabla_\hor f_t}\Rb_k(\varphi_t(x)),v_j(t)\right>\\
&=-\sum_{k}O_{ik}(t)\left<\nabla_{v_j(t)}\Rb_k(\varphi_t(x)),\nabla_\hor f_t(\varphi_t(x))\right> \\
&=\sum_{k}O_{ik}(t)\left<\Rb_k(\varphi_t(x)),\nabla_{v_j(t)}\nabla_\hor f_t\right>=\left<\nabla_{v_j(t)}\nabla_\hor f_t,u_i(t)\right>.
\end{split}
\]

Therefore, we have
\begin{equation}\label{WTN10}
W(t)^T=\mathcal N_{10}(t).
\end{equation}

Therefore, the $ij$-th component of $W(t)^T\mathcal N_{01}(t)=\mathcal N_{10}(t)\mathcal N_{01}(t)$ is given by
\[
\begin{split}
&\sum_l\left<\dot u_l(t), v_i(t)\right>\left<\nabla_{u_l(t)}\nabla_\hor f_t,v_j(t)\right>\\
&=\sum_{k,l,s}O_{lk}(t)\left<\nabla_{\nabla_\hor f_t}\Rb_k(\varphi_t(x)),v_i(t)\right>O_{ls}(t)\left<\nabla_{\Rb_s}\nabla_\hor f_t(\varphi_t(x)),v_j(t)\right>\\
&=\sum_{k}\left<\nabla_{\nabla_\hor f_t}\Rb_k(\varphi_t(x)),v_i(t)\right>\left<\nabla_{\Rb_k}\nabla_\hor f_t(\varphi_t(x)),v_j(t)\right>
\end{split}
\]
and so
\begin{equation}\label{N10N01}
\begin{split}
\tr(\mathcal N_{10}(t)\mathcal N_{01}(t))&=\tr(W(t)^T\mathcal N_{01}(t))\\
&=\sum_{k}\left<(\nabla_{\nabla_\hor f_t}\Rb_k)_\hor,(\nabla_{\Rb_k}\nabla_\hor f_t)_\hor\right>_{\varphi_t(x)}.
\end{split}
\end{equation}

The $ij$-th component of $\mathcal N_{10}(t)\mathcal N_{10}(t)^T$ is
\[
\begin{split}
&\sum_{k, s} O_{lk}(t)\left<v_i(t),\nabla_{\nabla_\hor f}\Rb_k\right>O_{ls}(t)\left<\nabla_{\nabla_\hor f_t}\Rb_s,v_j(t)\right>\\
&=\sum_k \left<v_i(t),\nabla_{\nabla_\hor f}\Rb_k\right>\left<\nabla_{\nabla_\hor f_t}\Rb_k,v_j(t)\right>.
\end{split}
\]
Therefore,
\begin{equation}\label{N10}
\begin{split}
|\mathcal N_{10}(t)|^2&=\tr(\mathcal N_{10}(t)\mathcal N_{10}(t)^T)\\
&=\sum_k|(\nabla_{\nabla_\hor f_t}\Rb_k)_\hor|^2_{\varphi_t(x)}=\ric^\ver_{\varphi_t(x)}(\nabla_\hor f_t, \nabla_\hor f_t).
\end{split}
\end{equation}

By (1) and (3) of Lemma \ref{facts}, it follows that
\[
\begin{split}
&\tr(\mathcal M_{11}(t))=\sum_i\left<\nabla_{v_i(t)}(\nabla_\hor \dot f_t+\nabla_{\nabla_\hor f_t}\nabla_\hor f_t),v_i(t)\right>\\
&=\Delta_\hor\left(\dot f_t+\frac{1}{2}|\nabla_\hor f|^2\right)- 2\sum_i\left<\nabla_{v_i(t)}\nabla_{\nabla_\hor f}\nabla_\ver f,v_i(t)\right>.
\end{split}
\]

On the other hand, we have, by (8) and (12) of Lemma \ref{facts},
\[
\begin{split}
&\sum_i\left<\nabla_{v_i(t)}\nabla_{\nabla_\hor f_t}\nabla_\ver f_t,v_i(t)\right>=\sum_{i,l}\left<\nabla_{v_i(t)}\left(\Rb_lf\nabla_{\nabla_\hor f_t}\Rb_l\right),v_i(t)\right> \\
&=\ric^\hor_{\varphi_t(x)}(\nabla f_t,\nabla_\ver f_t)+\sum_l\left<\nabla_\hor(\Rb_l f_t),\nabla_{\nabla_\hor f_t}\Rb_l\right>_{\varphi_t(x)}\\
&=\ric^\hor_{\varphi_t(x)}(\nabla f_t,\nabla_\ver f_t)+\sum_l\left<(\nabla_{\Rb_l}\nabla_\hor f_t-\nabla_{\nabla_\hor f_t}\Rb_l)_\hor,\nabla_{\nabla_\hor f_t}\Rb_l\right>_{\varphi_t(x)}.
\end{split}
\]

Therefore, by combining this with (\ref{N11}), (\ref{N11sq}), (\ref{WTN10}), (\ref{N10N01}), and (\ref{N10}), we obtain
\[
\begin{split}
&\frac{d}{dt}\Delta_\hor f_t(\varphi_t)=\frac{d}{dt}\tr\left(\mathcal N_{11}(t)\right)\\
&=-\tr(\mathcal N_{11}(t)^2)-2\tr(\mathcal N_{10}(t)\mathcal N_{01}(t)) -|\mathcal N_{10}(t)|^2-\tr(\mathcal R_{11}(t))+\tr(\mathcal M_{11}(t))\\
&\leq -\frac{1}{k}(\Delta_\hor f_t(\varphi_t(x)))^2-\ric^\hor_{\varphi_t(x)}(\nabla f_t,\nabla f_t) +\ric^\ver_{\varphi_t(x)}(\nabla_\hor f_t, \nabla_\hor f_t)\\
& -4\sum_k\left<(\nabla_{\nabla_\hor f_t}\Rb_k)_\hor,(\nabla_{\Rb_k}\nabla_\hor f_t)_\hor\right>_{\varphi_t(x)} + \Delta_\hor\left(\dot f_t+\frac{1}{2}|\nabla_\hor f|^2\right).
\end{split}
\]
\end{proof}

\smallskip

\section{Proof of the main results}

This section is devoted to the proofs of the main results. We begin with

\begin{lem}\label{mainlem1}
Assume that the orthogonal complement $D^\perp$ of the distribution $D$ is involutive and is given by the span of $n-k$ horizontal isometries. Then the followings hold:
\begin{enumerate}
\item $\frac{d}{dt}(f_t(\varphi_t))=-\dot f_t(\varphi_t)+2\Delta_\hor f_t(\varphi_t)+ 2V(\varphi_t) + 2Kf_t(\varphi_t)$,
\item $\frac{d}{dt}(\dot f_t(\varphi_t)) =\Delta_\hor\dot f_t(\varphi_t)+K\dot f_t(\varphi_t)$,
\item $\frac{d}{dt}\left(\frac{1}{2}|\nabla_\ver f_t|^2_{\varphi_t}\right)=\Delta_\hor \left(\frac{1}{2}|\nabla_\ver f_t|^2\right)(\varphi_t)\\
-\sum_i\left|(\nabla_{\Rb_i}\nabla_\hor f_t-\nabla_{\nabla_\hor f_t}\Rb_i)_\hor\right|_{\varphi_t}^2+\left<\nabla_\ver f_t,\nabla V\right>_{\varphi_t}$.
\end{enumerate}
\end{lem}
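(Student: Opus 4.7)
All three identities come from applying the chain rule
\[
\frac{d}{dt}h_t(\varphi_t) = \dot h_t(\varphi_t) + (\nabla_\hor f_t)(h_t)(\varphi_t)
\]
to the three functions $f_t$, $\dot f_t$, and $g_t:=\tfrac12|\nabla_\ver f_t|^2$, and then using the evolution equation (\ref{eqn}) for $f_t$.

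Part (1) is immediate: the chain rule gives $\dot f_t+|\nabla_\hor f_t|^2$, and substituting $\tfrac12|\nabla_\hor f_t|^2 = -\dot f_t+\Delta_\hor f_t+V+Kf_t$ from (\ref{eqn}) reproduces the right-hand side. For (2), I would differentiate (\ref{eqn}) in $t$ (noting that $V$ is time-independent) to get $\ddot f_t+\left<\nabla_\hor\dot f_t,\nabla_\hor f_t\right> = \Delta_\hor \dot f_t+K\dot f_t$, and combine with the chain rule using $\left<\nabla\dot f_t,\nabla_\hor f_t\right>=\left<\nabla_\hor\dot f_t,\nabla_\hor f_t\right>$. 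Both steps are routine.

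Part (3) is where the genuine work occurs. Pick locally an orthonormal frame $\Rb_1,\ldots,\Rb_{n-k}$ of $D^\perp$, so that $g_t=\tfrac12\sum_i(\Rb_i f_t)^2$, and write
\[
(\nabla_\hor f_t)(\Rb_i f_t) = \Rb_i|\nabla_\hor f_t|^2 + [\nabla_\hor f_t,\Rb_i]f_t.
\]
After applying $\Rb_i$ to (\ref{eqn}) to eliminate $\Rb_i\dot f_t$, the chain rule produces a sum whose two $|\nabla_\hor f_t|^2$-type contributions combine via $[\nabla_\hor f_t,\Rb_i] = \nabla_{\nabla_\hor f_t}\Rb_i-\nabla_{\Rb_i}\nabla_\hor f_t$ into
\[
\tfrac12\Rb_i|\nabla_\hor f_t|^2 + [\nabla_\hor f_t,\Rb_i]f_t = \left<\nabla_{\nabla_\hor f_t}\Rb_i,\nabla_\hor f_t\right>,
\]
which vanishes by the skew-symmetry property $\left<\nabla_X\Rb_i,X\right>=0$ of the horizontal isometry $\Rb_i$. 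The $\Rb_i V$ terms sum to $\left<\nabla_\ver f_t,\nabla V\right>$. The commutation $\Rb_i\Delta_\hor = \Delta_\hor\Rb_i$ (a consequence of the horizontal-isometry assumption, to be recorded in Lemma \ref{facts}) together with the Bochner identity $\Delta_\hor(h^2/2)=h\Delta_\hor h+|\nabla_\hor h|^2$ applied to $h=\Rb_i f_t$ then yields
\[
\sum_i(\Rb_i f_t)\Rb_i\Delta_\hor f_t = \Delta_\hor g_t - \sum_i|\nabla_\hor(\Rb_i f_t)|^2.
\]

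The main obstacle is therefore the remaining Bochner-type identification
\[
|\nabla_\hor(\Rb_i f_t)|^2 = \bigl|(\nabla_{\Rb_i}\nabla_\hor f_t-\nabla_{\nabla_\hor f_t}\Rb_i)_\hor\bigr|^2
\]
(modulo terms that cancel in the sum over $i$). To see this I would start from the Leibniz identity $v_j(\Rb_i f_t)=\left<\nabla_{v_j}\Rb_i,\nabla f_t\right>+\left<\Rb_i,\nabla_{v_j}\nabla f_t\right>$, split $\nabla f_t=\nabla_\hor f_t+\nabla_\ver f_t$, use symmetry of the Hessian to rewrite $\left<\Rb_i,\nabla_{v_j}\nabla f_t\right>=\left<v_j,\nabla_{\Rb_i}\nabla f_t\right>$, and then apply the skew-symmetry of $X\mapsto\nabla_X\Rb_i$ on horizontal vectors to get $\left<\nabla_{v_j}\Rb_i,\nabla_\hor f_t\right>=-\left<v_j,\nabla_{\nabla_\hor f_t}\Rb_i\right>$. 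This identifies the horizontal part of $\nabla(\Rb_i f_t)$ with $(\nabla_{\Rb_i}\nabla_\hor f_t-\nabla_{\nabla_\hor f_t}\Rb_i)_\hor$; the remaining cross terms, which involve $\nabla_\ver f_t$ and Christoffel-type quantities $\left<\nabla_{v_j}\Rb_i,\Rb_l\right>$ and $\left<v_j,\nabla_{\Rb_i}\Rb_l\right>$, need to be shown to drop out when summed over $i$ using the involutivity of $D^\perp$ together with the horizontal-isometry identities collected in Lemma \ref{facts}. Keeping these cross terms straight, rather than the soft structural steps, is where the main difficulty lies.
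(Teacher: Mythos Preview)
Your approach is correct and coincides with the paper's: parts (1) and (2) are handled the same way, and for (3) the paper invokes Lemma~\ref{facts}(10) directly, whereas you unpack it into the commutation $\Rb_i\Delta_\hor=\Delta_\hor\Rb_i$ (Lemma~\ref{facts}(9)), the product rule for $\Delta_\hor$, and the identification of $\nabla_\hor(\Rb_i f)$ (Lemma~\ref{facts}(8)), which is exactly how (10) is proved in the appendix. The one place you overestimate the difficulty is your ``main obstacle'': the identity $\nabla_\hor(\Rb_i f)=(\nabla_{\Rb_i}\nabla_\hor f-\nabla_{\nabla_\hor f}\Rb_i)_\hor$ holds for each $i$ individually, with the cross terms you list vanishing termwise---$\left<\nabla_{v_j}\Rb_i,\Rb_l\right>=0$ since $\nabla_{v_j}\Rb_i$ is horizontal (Lemma~\ref{facts}(4)) and $\left<v_j,\nabla_{\Rb_i}\Rb_l\right>=0$ since $\nabla_{\Rb_i}\Rb_l$ is vertical (Lemma~\ref{facts}(3))---so no cancellation after summing over $i$ is required.
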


\begin{proof}[Proof of Lemma \ref{mainlem1}]
The first assertion follows from (\ref{eqn}). By (\ref{eqn}), we have
\[
\begin{split}
&\frac{d}{dt}(\dot f_t(\varphi_t))=\frac{d}{dt}\left(-\frac{1}{2}|\nabla_\hor f_t|^2_{\varphi_t}+\Delta_\hor f_t(\varphi_t)+V(\varphi_t)+Kf_t(\varphi_t)\right)\\
& =-\left<\nabla_\hor f_t,\nabla_\hor\dot f_t\right>_{\varphi_t}-\frac{1}{2}\left<\nabla|\nabla_\hor f_t|^2,\nabla_\hor f_t\right>_{\varphi_t}+\Delta_\hor\dot f_t(\varphi_t) \\
&\quad +\left<\nabla_\hor\Delta_\hor f_t,\nabla_\hor f_t\right>+\left<\nabla V,\nabla_\hor f\right>+K\dot f_t(\varphi_t)+K|\nabla_\hor f_t|^2_{\varphi_t}\\
&  =\Delta_\hor\dot f_t(\varphi_t)+K\dot f_t(\varphi_t).
\end{split}
\]

By (1) and (8) of Lemma \ref{facts}, we have
\[
\begin{split}
\left<\nabla_\hor (\Rb_i f),\nabla_\hor f\right>&=\left<\nabla_{\Rb_i}\nabla_\hor f-\nabla_{\nabla_\hor f}\Rb_i,\nabla_\hor f\right>\\
&=\left<\nabla\left(\frac{1}{2}|\nabla_\hor f|^2\right),\Rb_i\right>.
\end{split}
\]

Therefore, by combining this with (\ref{eqn}), we have
\[
\begin{split}
\frac{d}{dt}\left(\frac{1}{2}|\nabla_\ver f_t|^2_{\varphi_t}\right)&=\sum_i \Rb_if_t(\varphi_t)\frac{d}{dt}(\Rb_if_t(\varphi_t))\\
&=\sum_i\Rb_if_t(\varphi_t)\left(\Rb_i\dot f_t(\varphi_t)+\left<\nabla_\hor f_t,\nabla_\hor (\Rb_if_t)\right>_{\varphi_t}\right)\\
&=\sum_i\Rb_if_t(\varphi_t)\Rb_i(\Delta f_t+ V + Kf_t)(\varphi_t)\\
&=\left<\nabla_\ver f_t,\nabla\Delta_\hor f_t+\nabla V\right>_{\varphi_t}+K|\nabla_\ver f|^2_{\varphi_t}.
\end{split}
\]

Finally, by (10) of Lemma \ref{facts}, we obtain
\[
\begin{split}
&\frac{d}{dt}\left(\frac{1}{2}|\nabla_\ver f_t|^2_{\varphi_t}\right)=\Delta_\hor \left(\frac{1}{2}|\nabla_\ver f_t|^2\right)(\varphi_t)+K|\nabla_\ver f_t|_{\varphi_t}^2\\
&-\sum_i\left|(\nabla_{\Rb_i}\nabla_\hor f_t-\nabla_{\nabla_\hor f_t}\Rb_i)_\hor\right|_{\varphi_t}^2+\left<\nabla_\ver f_t,\nabla V\right>_{\varphi_t}.
\end{split}
\]
\end{proof}

\begin{proof}[Proof of Theorem \ref{keythm}]
By Lemma \ref{keylem} and (12) of Lemma \ref{facts},
\[
\begin{split}
\frac{d}{dt}\Delta_\hor f_t(\varphi_t)&\leq \frac{a_4(t)^2}{k}-\frac{2a_4(t)}{k}\Delta_\hor f_t(\varphi_t)+ \Delta_\hor\left(\dot f_t+\frac{1}{2}|\nabla_\hor f|^2\right)(\varphi_t)\\
&-\ric^\hor_{\varphi_t}(\nabla f_t,\nabla f_t)-3\ric^\ver_{\varphi_t}(\nabla_\hor f_t, \nabla_\hor f_t)\\
&-4\sum_k\left<\nabla_{\nabla_\hor f_t}\Rb_k,\nabla_{\Rb_k}\nabla_\hor f_t-\nabla_{\nabla_\hor f_t}\Rb_k\right>_{\varphi_t}.
\end{split}
\]

By Young's inequality and (12) of Lemma \ref{facts}, the above inequality becomes
\[
\begin{split}
&\frac{d}{dt}\Delta_\hor f_t(\varphi_t) \leq \frac{a_4(t)^2}{k}-\frac{2a_4(t)}{k}\Delta_\hor f_t(\varphi_t)+ \Delta_\hor\left(\dot f_t+\frac{1}{2}|\nabla_\hor f|^2\right)(\varphi_t)\\
&+a_2(t)\sum_k|(\nabla_{\nabla_\hor f_t}\Rb_k-\nabla_{\Rb_k}\nabla_\hor f_t)_\hor|^2_{\varphi_t}+\frac{4}{a_2(t)}\ric^\ver_{\varphi_t}(\nabla_\hor f_t, \nabla_\hor f_t)\\
&-\ric^\hor_{\varphi_t}(\nabla f_t,\nabla f_t)-3\ric^\ver_{\varphi_t}(\nabla_\hor f_t, \nabla_\hor f_t).
\end{split}
\]

Using the first and the second assumptions,
\[
\begin{split}
\frac{d}{dt}\Delta_\hor f_t(\varphi_t) &\leq \frac{a_4(t)^2}{k}-\frac{2a_4(t)}{k}\Delta_\hor f_t(\varphi_t)+  \Delta_\hor\left(\dot f_t+\frac{1}{2}|\nabla_\hor f|^2\right)(\varphi_t)\\
&\quad +a_2(t)\sum_k|(\nabla_{\nabla_\hor f_t}\Rb_k-\nabla_{\Rb_k}\nabla_\hor f_t)_\hor|^2_{\varphi_t}\\
&\quad +\frac{4K_3}{a_2(t)}|\nabla_\hor f_t|^2_{\varphi_t}-K_1|\nabla_\hor f_t|^2_{\varphi_t} -K_2|\nabla_\ver f_t|^2_{\varphi_t}.
\end{split}
\]

Let $F_t(x)=\Delta_\hor f_t(x)+a_1(t)\dot f_t(x)+\frac{a_2(t)}{2}|\nabla_\ver f_t(x)|^2+a_3(t)f_t(x)$. By Lemma \ref{mainlem1}, we have
\[
\begin{split}
&\frac{d}{dt}F_t(\varphi_t) \leq \frac{a_4(t)^2}{k}-\frac{2a_4(t)}{k}\Delta_\hor f_t(\varphi_t)+  \Delta_\hor\left(\dot f_t+\frac{1}{2}|\nabla_\hor f|^2\right)(\varphi_t)\\
& +\frac{4K_3}{a_2(t)}|\nabla_\hor f_t|^2_{\varphi_t}-K_1|\nabla_\hor f_t|^2_{\varphi_t} -K_2|\nabla_\ver f_t|^2_{\varphi_t} +\dot a_1(t)\dot f_t(\varphi_t) +\frac{\dot a_2(t)}{2}|\nabla_\ver f_t|^2_{\varphi_t}\\
&+\dot a_3(t)f_t(\varphi_t) +a_1(t)\Delta_\hor\dot f_t(\varphi_t)+a_1(t) K\dot f_t(\varphi_t)+a_2(t)\Delta_\hor \left(\frac{1}{2}|\nabla_\ver f_t|^2\right)(\varphi_t)\\
& +a_2(t)\left<\nabla_\ver f_t,\nabla V\right>_{\varphi_t}+a_3(t)(-\dot f_t(\varphi_t)+2\Delta_\hor f_t(\varphi_t)+ 2V(\varphi_t) + 2Kf_t(\varphi_t)).
\end{split}
\]

By (\ref{eqn}) and collecting terms, we obtain
\[
\begin{split}
&\frac{d}{dt}F_t(\varphi_t) \leq \frac{a_4(t)^2}{k}+  \Delta_\hor F_t+ \Delta_\hor V+2a_3(t)V(\varphi_t)\\
& +2\left(\frac{4K_3}{a_2(t)}-K_1\right)V+\left(\frac{\dot a_2(t)}{2}-K_2\right)|\nabla_\ver f_t|^2_{\varphi_t}\\
&+\left(a_3(t)-\frac{2a_4(t)}{k} + K+2\left(\frac{4K_3}{a_2(t)}-K_1\right)\right)\Delta_\hor f_t\\
&+\left(\dot a_1(t)+a_1(t) K-a_3(t)-2\left(\frac{4K_3}{a_2(t)}-K_1\right)\right)\dot f_t(\varphi_t) \\
&+\left(\dot a_3(t)+2Ka_3(t)+2K\left(\frac{4K_3}{a_2(t)}-K_1\right)\right)f_t(\varphi_t)\\
& +a_2(t)\left<\nabla_\ver f_t,\nabla V\right>_{\varphi_t}.
\end{split}
\]

By Young's inequality and the definition of $F_t$, we obtain
\[
\begin{split}
&\frac{d}{dt}F_t(\varphi_t) \leq \frac{a_4(t)^2}{k}+  \Delta_\hor F_t(\varphi_t)+ \Delta_\hor V(\varphi_t)+2a_3(t)V(\varphi_t)\\
&+K_5|\nabla_\ver V|^2_{\varphi_t}+\left(a_3(t)-\frac{2a_4(t)}{k} + K+\frac{8K_3}{a_2(t)}-2K_1\right)F_t(\varphi_t)\\
& +2\left(\frac{4K_3}{a_2(t)}-K_1\right)V(\varphi_t)+\left(\frac{\dot a_2(t)}{2}-K_2-4K_3\right)|\nabla_\ver f_t|^2_{\varphi_t}\\
&+a_2(t)\left(\frac{a_2(t)}{4K_5}-\frac{a_3(t)}{2}+\frac{a_4(t)}{k} - \frac{K}{2}+K_1\right)|\nabla_\ver f_t|^2_{\varphi_t}\\
&+\left(\dot a_1(t)+\frac{2a_1(t)a_4(t)}{k}+(a_1(t)+1)\left(2K_1-a_3(t) -\frac{8K_3}{a_2(t)}\right)\right) \dot f_t(\varphi_t)\\
&+\left(\dot a_3(t)+\frac{2a_3(t)a_4(t)}{k}+(K-a_3(t))\left(a_3(t)+\frac{8K_3}{a_2(t)}-2K_1\right)\right)f_t(\varphi_t).
\end{split}
\]

By assumptions (6), (7), and (8), the inequality becomes
\[
\begin{split}
\frac{d}{dt}F_t(\varphi_t) &\leq \frac{a_4(t)^2}{k}+  \Delta_\hor F_t(\varphi_t)+ \Delta_\hor V(\varphi_t)+K_5|\nabla_\ver V|^2_{\varphi_t}\\
&+\left(a_3(t)-\frac{2a_4(t)}{k} + K+\frac{8K_3}{a_2(t)}-2K_1\right)F_t(\varphi_t)\\
& +2\left(a_3(t)+\frac{4K_3}{a_2(t)}-K_1\right)V(\varphi_t).
\end{split}
\]

By assumptions (3), (4), and (5),
\[
\begin{split}
\frac{d}{dt}F_t(\varphi_t) &\leq \frac{a_4(t)^2}{k}+  \Delta_\hor F_t(\varphi_t)+K_6+2\left(a_3(t)+\frac{4K_3}{a_2(t)}\right)K_4\\
&+\left(a_3(t)-\frac{2a_4(t)}{k} + K+\frac{8K_3}{a_2(t)}-2K_1\right)F_t(\varphi_t).
\end{split}
\]

Let $\epsilon>0$ and let  $r_\epsilon(\cdot)$ be a solution of
\[
\begin{split}
\dot r_\epsilon(t) &= \frac{a_4(t)^2}{k} +K_6+2\left(a_3(t)+\frac{4K_3}{a_2(t)}\right)K_4\\
&+\left(a_3(t)-\frac{2a_4(t)}{k} + K+\frac{8K_3}{a_2(t)}-2K_1\right)r_\epsilon(t)+\epsilon.
\end{split}
\]
with condition $r_\epsilon (t)\to \infty$ as $t\to 0$.

Let $t_0>0$ be the first time where there is a point $x$ in $M$ satisfying $F_{t_0}(\varphi_{t_0}(x))=r(t_0)$. Then
\[
\begin{split}
\dot r_\epsilon(t_0)&\leq \frac{d}{dt}F_t(\varphi_t)\leq \frac{a_4(t_0)^2}{k}+ K_6+2\left(a_3(t_0)+\frac{4K_3}{a_2(t_0)}\right)K_4\\
&+\left(a_3(t_0)-\frac{2a_4(t_0)}{k} + K+\frac{8K_3}{a_2(t_0)}-2K_1\right)r_\epsilon(t_0)<\dot r_\epsilon(t_0)
\end{split}
\]
which is a contradiction.

Therefore, $F_t(x)< r_\epsilon(t)$ for all $t\geq 0$ and all $x$ in $M$. The result follows from stability of $r$.
\end{proof}

\begin{proof}[Proof of Corollary \ref{keycor1}]
This follows from Theorem \ref{keythm} by setting $K_1=K=0$, $a_3\equiv 0$, $a_1\equiv c$, $a_4(t)=\frac{4(c+1)kK_3}{ca_2(t)}$, $a_2(t)=2\left(K_2-\frac{4K_3}{c}\right)t$, and $r(t)=\frac{4(c+1)^2kK_3^2}{(cK_2-4K_3)(8K_3-cK_2)t}$.
\end{proof}

\begin{proof}[Proof of Corollary \ref{keycor2}]
If we set $K_1=K=0$, $a_3\equiv 0$, $a_1\equiv c$ is a constant, $a_2(t)=c_1\tanh(c_2t)$,  $a_4(t)=\frac{4k(c+1)K_3}{ca_2(t)}$, and $K_5=\frac{4k(a_1+1)^2K_3^2}{a_1K_6(8K_3-a_1K_2)}$ in Theorem \ref{keythm}. Then a computation shows that $r(t)=\frac{K_6}{c_2}\coth(c_2t)+cK_4$ and the result follows from Theorem \ref{keythm}
\end{proof}

\smallskip

\section{Appendix}

In this appendix, we provide the detail calculations that we used in the proof of Theorem \ref{keythm}.

\begin{lem}\label{facts}
Suppose that the assumptions in Lemma \ref{keylem} hold. Let $\Rb_1,...,\Rb_{n-k}$ be the horizontal isometries which span $D^\perp$. Let $X_1$ and $X_2$ be vector fields contained in $D$ and let $Z$ be a vector field contained in $D^\perp$. Then the followings hold:
\begin{enumerate}
\item $\left<\nabla_{X_1} Z,X_2\right>=-\left<\nabla_{X_2} Z,X_1\right>$,
\item $\nabla_{v_i}v_i$ is horizontal,
\item $\nabla_{\Rb_i}\Rb_j$ is vertical,
\item $\nabla_{v_j}\Rb_i$ is horizontal,
\item $\nabla_{\nabla_\hor f}\nabla_\hor f =\frac{1}{2}\nabla_\hor|\nabla_\hor f|^2- 2(\nabla_{\nabla_\hor f}\nabla_\ver f)_\hor$,
\item $\sum_j\left<\nabla_{[\Rb_i,v_j]}\nabla f, v_j\right>=0$,
\item $\sum_j\left<\nabla_{[\Rb_i,v_j]}\nabla_\hor f, v_j\right>=-\sum_j\left<\nabla_{v_j}\nabla_\hor f, [\Rb_i,v_j]\right>$,
\item $\nabla_\hor (\Rb_i f) = (\nabla_{\Rb_i}\nabla_\hor f-\nabla_{\nabla_\hor f}\Rb_i)_\hor$,
\item $\Delta_\hor (\Rb_i f)=\Rb_i(\Delta_\hor f)$,
\item $\Delta_\hor\left(\frac{1}{2}|\nabla_\ver f|^2\right)=\left<\nabla_\ver f,\nabla\Delta_\hor f\right>+\sum_i |(\nabla_{\Rb_i}\nabla_\hor f-\nabla_{\nabla_\hor f}\Rb_i)_\hor|^2$,
\item $\ric^\hor(\nabla_\ver f,\nabla_\ver f)=\sum_{j}|(\nabla_{v_j}\nabla_\ver f)_\hor|^2$,
\item $\ric^\hor(\nabla f,\Rb_i)=\sum_j\left<\nabla_{v_j}\nabla_{\nabla_\hor f}\Rb_i,v_j\right>$,
\item $\left<\Rm(v_j,\Rb_i)\Rb_j,v_j\right>=\left<\nabla_{v_j}\Rb_i,\nabla_{v_j}\Rb_j\right>$.
\end{enumerate}
\end{lem}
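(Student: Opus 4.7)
My plan is to establish the thirteen items in roughly the order given, treating the horizontal-isometry condition
\[
\left<\nabla_{X_1}\Rb_i,X_2\right>+\left<X_1,\nabla_{X_2}\Rb_i\right>=0,
\]
metric compatibility, and involutivity of $D^\perp$ as the three engines, and using each earlier item as ammunition for the next. Item (1) is immediate after writing $Z=\sum_i a_i\Rb_i$, since the $(X_1a_i)$-pieces die against $\left<\Rb_i,X_2\right>=0$. For (4), I run a short chain of substitutions combining metric compatibility with the horizontal-isometry fact that $[\Rb_i,v_j]$ is horizontal (which lets me drop Lie brackets paired against a vertical vector) and with involutivity $[\Rb_i,\Rb_k]\in D^\perp$ (which lets me swap $\nabla_{\Rb_i}\Rb_k$ for $\nabla_{\Rb_k}\Rb_i$ modulo vertical terms); the net effect is the self-cancelling identity $\left<\nabla_{v_j}\Rb_i,\Rb_k\right>=-\left<\nabla_{v_j}\Rb_i,\Rb_k\right>$, forcing vanishing. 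Items (2) and (3) are then corollaries of (4) and (1) together with metric compatibility.

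Item (5) is a pointwise Bochner manipulation: for horizontal $X$ I expand $\tfrac12 X|\nabla_\hor f|^2$, swap $X$ with $\nabla_\hor f$ inside $\left<\nabla_X\nabla f,\nabla_\hor f\right>$ by symmetry of the Hessian, and rewrite the $\nabla_\ver f=\sum_l(\Rb_lf)\Rb_l$ part using (1); a separate application of (1) with $X_1=X_2=\nabla_\hor f$ shows $\nabla_{\nabla_\hor f}\nabla_\hor f$ is already horizontal, so (5) is a genuine identity of horizontal vectors. Item (6) follows by expanding $[\Rb_i,v_j]=\sum_k c^k_{ij}v_k$ and checking, via Leibniz on the constant inner products $\left<v_j,v_k\right>$ and via horizontal isometry, that the coefficients $c^k_{ij}$ are skew in $j,k$; this kills the contraction against the symmetric Hessian. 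Item (7) reduces to (6) after subtracting off $\nabla_\ver f$ and applying the horizontal skew-symmetry $\left<\nabla_X\nabla_\ver f,Y\right>+\left<\nabla_Y\nabla_\ver f,X\right>=0$, itself a direct corollary of (1). Item (8) comes from differentiating $\Rb_if=\left<\nabla f,\Rb_i\right>$ along horizontal $X$, using Hessian symmetry plus (3) to drop the vertical piece of $\nabla_X\nabla f$, and (1) to convert $\left<\nabla_X\Rb_i,\nabla_\hor f\right>$ into $-\left<\nabla_{\nabla_\hor f}\Rb_i,X\right>$. For (9), the slickest route is that the flow of $\Rb_i$ is a horizontal isometry and hence intertwines $\Delta_\hor$; differentiating the intertwining at $t=0$ gives $\Rb_i\Delta_\hor f=\Delta_\hor(\Rb_if)$. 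Item (10) is then the standard Leibniz-Bochner identity $\Delta_\hor(g^2/2)=g\,\Delta_\hor g+|\nabla_\hor g|^2$ applied to $g=\Rb_if$ and summed over $i$, fed with (8) and (9).

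The computational heart is in (11)--(13). For (11) I expand $\sum_j\left<\Rm(v_j,\Rb_k)\Rb_l,v_j\right>$ by the definition of $\Rm$. The piece $\sum_j\left<\nabla_{v_j}\nabla_{\Rb_k}\Rb_l,v_j\right>$ vanishes because $\nabla_{\Rb_k}\Rb_l$ is vertical by (3) and each summand then dies by (1). The piece $\sum_j\left<\nabla_{\Rb_k}\nabla_{v_j}\Rb_l,v_j\right>$ I attack via $\left<\nabla_{v_j}\Rb_l,v_j\right>=0$ together with $\nabla_{\Rb_k}v_j=\nabla_{v_j}\Rb_k+[\Rb_k,v_j]$; this splits it into a $\left<\nabla_{v_j}\Rb_k,\nabla_{v_j}\Rb_l\right>$ term plus a bracket term which cancels exactly against the remaining curvature term $\sum_j\left<\nabla_{[v_j,\Rb_k]}\Rb_l,v_j\right>$ after one more use of (1). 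Item (13) is the same calculation specialised. For (12), I split $\nabla f=\nabla_\hor f+\nabla_\ver f$ in $\sum_j\left<\Rm(v_j,\nabla f)\Rb_i,v_j\right>$: the $\nabla_{\nabla_\ver f}\Rb_i$ contribution is vertical by (3), so its $v_j$-trace vanishes by (2) and metric compatibility, and the leftover bracket and connection terms cancel using Hessian symmetry together with the antisymmetry of $\left<\nabla_{v_l}\Rb_i,v_j\right>$ in $j,l$ (again from (1)). The main obstacle is simply the bookkeeping in (11)--(12): many intermediate expressions mix horizontal and vertical components, and the cancellations only become visible after systematically rewriting $\nabla_\Rb(\text{horizontal})$ via the commutator $[\Rb,\cdot]$ and discarding $\nabla_{\text{vertical}}\Rb$-contributions through (3).
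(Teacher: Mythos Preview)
Your proposal is correct and, for items (1)--(8) and (10)--(13), follows essentially the same route as the paper, with only cosmetic differences: you establish (4) before (3) rather than after; you handle (11) and (13) bilinearly at the level of the basis vectors $\Rb_k,\Rb_l$ whereas the paper works directly with $\nabla_\ver f$ in (11); and you are slightly more careful in (5) in noting that $\nabla_{\nabla_\hor f}\nabla_\hor f$ is itself horizontal.

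The one genuine divergence is item (9). The paper computes $\Delta_\hor(\Rb_if)$ directly by feeding (8) into the definition and expanding
\[
\sum_j\left<\nabla_{v_j}\bigl(\nabla_{\Rb_i}\nabla_\hor f-\nabla_{\nabla_\hor f}\Rb_i\bigr),v_j\right>
\]
term by term, cancelling everything via (1), (3), (4) and (7); this is mechanical but entirely self-contained within the infinitesimal identities already established. Your symmetry argument is shorter and makes the geometric origin of the commutation transparent, but it relies on the observation that the flow $\psi_t$ of $\Rb_i$ preserves not only $D$ and the horizontal metric but also $D^\perp$, so that $(\psi_t)_*$ commutes with the horizontal projection; this is what ensures $\psi_t$ genuinely intertwines $\Delta_\hor$ (otherwise the vertical part of $[V_i,V_j]$ could acquire a horizontal component under pushforward and spoil the Koszul comparison). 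That last point is exactly where the involutivity hypothesis on $D^\perp$ enters your proof of (9), and you should make this dependency explicit.
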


\begin{proof}
By assumption, we have
\[
\left<\nabla_{\Rb_i} X_1, X_2\right>+\left<\nabla_{\Rb_i}  X_2, X_1\right>=\Rb_i\left<X_1,X_2\right>=\left<[\Rb_i, X_1],X_2\right>+\left<[\Rb_i, X_2],X_1\right>.
\]
This gives (1).

It follows from (1) that
\[
\left<\nabla_{v_i}v_i,w_j\right>=-\left<v_i,\nabla_{v_i}w_j\right>=0
\]
which is (2).

Since $D$ is involutive, we also have
\[
\begin{split}
&\left<\nabla_{\Rb_i}\Rb_j,v_k\right>=-\left<\Rb_j,\nabla_{\Rb_i}v_k\right>=-\left<\Rb_j,\nabla_{v_k}\Rb_i\right>\\
&=\left<\nabla_{v_k}\Rb_j,\Rb_i\right>=\left<\nabla_{\Rb_j}v_k,\Rb_i\right>=-\left<v_k,\nabla_{\Rb_j}\Rb_i\right>=-\left<v_k,\nabla_{\Rb_i}\Rb_j\right>.
\end{split}
\]
This gives (3).

Since
\[
\left<\nabla_{v_j}\Rb_i,\Rb_k\right> = \left<\nabla_{\Rb_i}v_j,\Rb_k\right> = -\left<v_j,\nabla_{\Rb_i}\Rb_k\right> =0,
\]
(4) holds.

The statement (5) follows from
\[
\begin{split}
\left<\frac{1}{2}\nabla|\nabla_\hor f|^2,X_1\right> &= \left<\nabla_{X_1}\nabla_\hor f,\nabla_\hor f\right>\\
&=  \left<\nabla_{X_1}\nabla f,\nabla_\hor f\right>- \left<\nabla_{X_1}\nabla_\ver f,\nabla_\hor f\right>\\
&=  \left<\nabla_{\nabla_\hor f}\nabla f,X_1\right>+ \left<\nabla_{\nabla_\hor f}\nabla_\ver f,X_1\right>\\
&=  \left<\nabla_{\nabla_\hor f}\nabla_\hor f,X_1\right>+ 2\left<\nabla_{\nabla_\hor f}\nabla_\ver f,X_1\right>.
\end{split}
\]

Since $\Rb_i$ is a horizontal isometry, $[\Rb_i,v_j]$ is horizontal. Therefore,
\[
\begin{split}
\sum_j\left<\nabla_{[\Rb_i,v_j]}\nabla f, v_j\right>&=\sum_j\left<\nabla_{v_j}\nabla f, [\Rb_i,v_j]\right>\\
&=\sum_{j,k}\left<\nabla_{v_j}\nabla f,v_k\right>\left<v_k, [\Rb_i,v_j]\right>\\
& =-\sum_{j,k}\left<\nabla_{v_j}\nabla f,v_k\right>\left<v_j, [\Rb_i,v_k]\right>=0
\end{split}
\]
which is (6).

It follows from (1) and (6) that
\[
\begin{split}
0=\sum_j\left<\nabla_{[\Rb_i,v_j]}\nabla f, v_j\right>&=\sum_j\left<\nabla_{v_j}\nabla f, [\Rb_i,v_j]\right>\\
&=\sum_j\left<\nabla_{v_j}\nabla_\hor f, [\Rb_i,v_j]\right> +\left<\nabla_{v_j}\nabla_\ver f, [\Rb_i,v_j]\right>\\
&=\sum_j\left<\nabla_{v_j}\nabla_\hor f, [\Rb_i,v_j]\right> -\left<\nabla_{[\Rb_i,v_j]}\nabla_\ver f, v_j\right>.
\end{split}
\]

By (6), we also have
\[
0=\sum_j\left<\nabla_{[\Rb_i,v_j]}\nabla_\hor f, v_j\right>+\sum_j\left<\nabla_{[\Rb_i,v_j]}\nabla_\ver f, v_j\right>.
\]
Therefore, (7) follows.

Statement (8) follows from
\[
\begin{split}
\left<\nabla (\Rb_i f),v_j\right> &= \left<\nabla_{v_j}\nabla f, \Rb_i\right> +\left<\nabla_\hor f, \nabla_{v_j}\Rb_i\right>+\left<\nabla_\ver f, \nabla_{v_j}\Rb_i\right>\\
 &= \left<\nabla_{\Rb_i}\nabla f,v_j\right> +\left<\nabla_\hor f, \nabla_{v_j}\Rb_i\right>+\left<\nabla_\ver f, \nabla_{\Rb_i}v_j\right>\\
&= \left<\nabla_{\Rb_i}\nabla_\hor f, v_j\right> -\left<\nabla_{\nabla_\hor f}\Rb_i,v_j\right>.
\end{split}
\]

By (1) and (8), we have
\[
\begin{split}
&\Delta_\hor (\Rb_i f)=\sum_j\left<\nabla_{v_j}(\nabla_{\Rb_i}\nabla_\hor f-\nabla_{\nabla_\hor f}\Rb_i),v_j\right>\\
&=\sum_j\left<\nabla_{\Rb_i}\nabla_{v_j}\nabla_\hor f,v_j\right>-\left<\nabla_{\nabla_\hor f}\nabla_{v_j}\Rb_i,v_j\right>\\
&+\sum_j\left<\nabla_{[v_j,\Rb_i]}\nabla_\hor f-\nabla_{[v_j,\nabla_\hor f]}\Rb_i,v_j\right>.
\end{split}
\]

By (1), (3), and (4), the above equation becomes
\[
\begin{split}
&\Delta_\hor (\Rb_i f)=\sum_j\left<\nabla_{\Rb_i}\nabla_{v_j}\nabla_\hor f,v_j\right>-\left<\nabla_{\nabla_\hor f}\nabla_{v_j}\Rb_i,v_j\right>\\
&+\sum_j\left<\nabla_{[v_j,\Rb_i]}\nabla_\hor f,v_j\right>+\left<\nabla_{v_j}\Rb_i,\nabla_{v_j}\nabla_\hor f\right>-\left<\nabla_{v_j}\Rb_i,\nabla_{\nabla_\hor f}v_j\right>.
\end{split}
\]

By (1) and (7),
\[
\begin{split}
\Delta_\hor (\Rb_i f)&=\sum_j\left<\nabla_{\Rb_i}\nabla_{v_j}\nabla_\hor f,v_j\right>\\
&-\sum_j\left<\nabla_{v_j}\nabla_\hor f,[v_j,\Rb_i]\right>+\left<\nabla_{v_j}\Rb_i,\nabla_{v_j}\nabla_\hor f\right>\\
&=\sum_j\left<\nabla_{\Rb_i}\nabla_{v_j}\nabla_\hor f,v_j\right>+\sum_j\left<\nabla_{v_j}\nabla_\hor f,\nabla_{\Rb_i}v_j\right>.
\end{split}
\]
This, together with (6), gives (9).

By (8) and (9), we have
\[
\begin{split}
\Delta_\hor\left(\frac{1}{2}|\nabla_\ver f|^2\right)&= \sum_{i,j}(v_i\Rb_j f)^2+\sum_j\Rb_j f \Delta_\hor(\Rb_j f)\\
&= \sum_{j}|\nabla_\hor(\Rb_j f)|^2+\left<\nabla_\ver f,\nabla \Delta_\hor f\right>\\
&= \sum_{j}|(\nabla_{\Rb_i}\nabla_\hor f-\nabla_{\nabla_\hor f}\Rb_i)_\hor|^2+\left<\nabla_\ver f,\nabla \Delta_\hor f\right>
\end{split}
\]
which is (10).

By (3), $\nabla_{\nabla_\ver f}\nabla_\ver f$ is vertical. Therefore, by (1), we have
\[
\begin{split}
\ric^\hor(\nabla_\ver f,\nabla_\ver f)=-\sum_j\left<\nabla_{\nabla_\ver f}\nabla_{v_j}\nabla_\ver f,v_j\right>-\sum_j\left<\nabla_{[v_j,\nabla_\ver f]}\nabla_\ver f,v_j\right>.
\end{split}
\]

By (1), the above becomes
\[
\begin{split}
&\ric^\hor(\nabla_\ver f,\nabla_\ver f)=\sum_j\left(\left<\nabla_{v_j}\nabla_\ver f,\nabla_{\nabla_\ver f} v_j\right>+\left<\nabla_{v_j}\nabla_\ver f,[v_j,\nabla_\ver f]_\hor\right>\right)\\
&=\sum_j\left(\left<\nabla_{v_j}\nabla_\ver f,(\nabla_{\nabla_\ver f} v_j)_\ver\right>+|(\nabla_{v_j}\nabla_\ver f)_\hor|^2\right).
\end{split}
\]
It follows from (4) that $\nabla_{\nabla_\ver f} v_j$ is horizontal. Therefore, (11) holds.

By (1) and (3),
\[
\begin{split}
\ric^\hor(\nabla f,\Rb_i)&=\sum_j\left<\nabla_{v_j}\nabla_{\nabla_\hor f}\Rb_i,v_j\right>+\left<\nabla_{v_j}\Rb_i,\nabla_{\nabla f}v_j\right>-\left<\nabla_{[v_j,\nabla f]}\Rb_i,v_j\right>\\
&=\sum_j\left<\nabla_{v_j}\nabla_{\nabla_\hor f}\Rb_i,v_j\right>+\left<\nabla_{v_j}\Rb_i,\nabla_{\nabla f}v_j\right>+\left<\nabla_{v_j}\Rb_i,[v_j,\nabla f]\right>\\
&=\sum_j\left<\nabla_{v_j}\nabla_{\nabla_\hor f}\Rb_i,v_j\right>+\left<\nabla_{v_j}\Rb_i,\nabla_{v_j}\nabla f\right>.
\end{split}
\]

Statement (12) follows since
\[
\sum_j\left<\nabla_{v_j}\Rb_i,\nabla_{v_j}\nabla f\right>=\sum_{j,k}\left<\nabla_{v_j}\Rb_i,v_k\right>\left<v_k,\nabla_{v_j}\nabla f\right>=0
\]
by (1).

By (1) and (3), we have
\[
\begin{split}
\left<\Rm(v_j,\Rb_i)\Rb_k,v_j\right>&=\left<\nabla_{v_j}\Rb_k,\nabla_{\Rb_i}v_j\right>+\left<\nabla_{v_j}\Rb_k,[v_j,\Rb_i]\right>\\
&=\left<\nabla_{v_j}\Rb_i,\nabla_{v_j}\Rb_k\right>
\end{split}
\]
which is (13).
\end{proof}

\smallskip


\begin{thebibliography}{20}
\bibitem{An} B. Andrews: Harnack inequalities for evolving hypersurfaces. Math. Z. 217 (1994), no. 2, 179–-197.
\bibitem{BaLe} D. Bakry, M. Ledoux: A logarithmic Sobolev form of the Li-Yau parabolic inequality. Rev. Mat. Iberoam. 22 (2006), no. 2, 683–-702.
\bibitem{BaGa} F. Baudoin, N. Garofalo: Curvature-dimension inequalities and Ricci lower bounds for sub-Riemannian manifolds with transverse symmetries, arXiv: 1101.3590 (2012).
\bibitem{Bl} D.E. Blair: Riemannian geometry of contact and symplectic manifolds. Second edition. Progress in Mathematics, 203. Birkhäuser Boston, Inc., Boston, MA, 2010.
\bibitem{Ca} H.-D. Cao: On Harnack's inequalities for the K\"ahler-Ricci flow. Invent. Math. 109 (1992), no. 2, 247–-263.
\bibitem{CaYa} H.-D. Cao, S.-T. Yau: Gradient estimates, Harnack inequalities and estimates for heat kernels of the sum of squares of vector fields. Math. Z. 211 (1992), no. 3, 485–-504.
\bibitem{Ch1} B. Chow: On Harnack's inequality and entropy for the Gaussian curvature flow. Comm. Pure Appl. Math. 44 (1991), no. 4, 469–-483.
\bibitem{Ch2} B. Chow: The Yamabe flow on locally conformally flat manifolds with positive Ricci curvature. Comm. Pure Appl. Math. 45 (1992), no. 8, 1003-–1014.
\bibitem{Ha1} R. Hamilton: A matrix Harnack estimate for the heat equation. Comm. Anal. Geom. 1 (1993), no. 1, 113–-126.
\bibitem{Ha2} R. Hamilton: The Harnack estimate for the Ricci flow. J. Differential Geom. 37 (1993), no. 1, 225–-243.
\bibitem{Ha3} R. Hamilton: Harnack estimate for the mean curvature flow. J. Differential Geom. 41 (1995), no. 1, 215–-226.
\bibitem{Le1} P.W.Y. Lee: On the Li-Yau estimate, arXiv: 1211.5559 (2012).
\bibitem{LiYa} P. Li, S.-T. Yau: On the parabolic kernel of the Schrödinger operator. Acta Math. 156 (1986), no. 3-4, 153–-201.
\bibitem{Mos} J. Moser: A Harnack Inequality for Parabolic Differential Equations. Commun. Pure Appl. Math 17 (1964), 101–-134.
\bibitem{Ni} L. Ni: Monotonicity and Li-Yau-Hamilton inequalities. Surveys in differential geometry. Vol. XII. Geometric flows, 251-–301, Surv. Differ. Geom., 12, Int. Press, Somerville, MA, 2008.
\end{thebibliography}
\end{document}